\patchcmd{\section}{\scshape}{\bfseries}{}{}
\renewcommand{\@secnumfont}{\bfseries}
\newcommand{\B}{{\mathcal B}}
\newcommand{\Q}{{\mathbb Q}}
\newcommand{\cW}{{\mathcal {W}}}
\newcommand{\cF}{{\mathcal {F}}}
\newcommand{\cP}{{\mathcal {P}}}
\newcommand{\N}{{\mathbb N}}
\newcommand{\QQ}{{\mathbb{Q}}}
\newcommand{\RR}{{\mathbb{R}}}
\newcommand{\card}{\text{card}}
\newcommand{\Area}{\text{Area}}
\newcommand{\eps}{\varepsilon}
\newcommand\blue[1]{\textcolor{blue}{#1}}
\numberwithin{equation}{section}
\newtheorem{thm}{Theorem}[section]
\newtheorem{lem}[thm]{Lemma}
\newtheorem{prop}[thm]{Proposition}
\newtheorem{cor}[thm]{Corollary}
\theoremstyle{definition}
\newtheorem{defn}[thm]{Definition}
\newtheorem{remark}[thm]{Remark}
\begin{document}
	\title{Delone sets associated with badly approximable triangles}
	\author{Shigeki Akiyama} \address{
		Institute of Mathematics, University of Tsukuba, 1-1-1 Tennodai, Tsukuba, Ibaraki, 305-8571 Japan
	} \email{akiyama@math.tsukuba.ac.jp}
	\author{Emily R. Korfanty} \address{Department of Mathematical and Statistical Sciences, University of Alberta, Edmonton, AB, T6G 2G1, Canada}\email{ekorfant@ualberta.ca}
	\author{Yan-li Xu$^*$} \address{Department of Mathematics and Statistics, Central China Normal University, Wuhan, 430079, China} \email{xu\_yl@mails.ccnu.edu.cn}
	\date{\today}
	\thanks{\indent\bf Key words and phrases:\ Badly approximable numbers, Hall's ray,
		Iterated Function System, Delone sets, Chabauty--Fell topology.}
	
	\thanks{* Corresponding author.}
	
	\begin{abstract}
		We construct new Delone sets associated with badly approximable numbers which are expected to have rotationally invariant diffraction. We optimize the discrepancy of corresponding tile orientations by investigating the linear equation $x+y+z=1$ where $\pi x$, $\pi y$, $\pi z$ are three angles of a triangle used in the construction and $x$, $y$, $z$ are badly approximable. In particular, we show that there are exactly two solutions that
		have the smallest partial quotients by lexicographical ordering.
	\end{abstract}
	\maketitle
	
	\section{Introduction}
	
	The study of non-periodic structures and their diffraction has been a topic of great interest since the discovery of quasicrystals in 1984 
	by Dan Shechtman 
	\cite{Shechtman-et-al:84}.  The diffraction from these materials exhibit sharp patterns of bright spots, known as Bragg peaks, despite having a non-periodic atomic structure. This raised a compelling question: \emph{Which non-periodic structures exhibit sharp diffraction patterns?} 
	
	Today, much is known about non-periodic structures when 
	the local patterns are finite up to translations; this property is known as finite local complexity.
	We refer the readers to \cite{Baake-Gahler:16, Baake-Grimm:13} for a broad range of examples and their corresponding theory of pure point diffraction.  
	However, diffraction is less understood for structures that do not have finite local complexity, especially for substitution tilings with statistical circular symmetry. Here, statistical circular symmetry refers to the orientations of the tiles being uniformly distributed on the unit circle
	when ordered according to the self-similar structure (see~\cite{Frettloh:08} for a definition). 
	The paradigm of such structures is the pinwheel tiling \cite{Radin:94}. Of the known tilings with statistical circular symmetry (see \cite{Frettloh:08,Frettloh-Harriss-Gahler,Sadun:98} for examples), the pinwheel tiling has been most thoroughly studied \cite{Baake-Frettloh-Grimm:07, Baake-Frettloh-Grimm:07b, Grimm-Deng:2011, MPS:06, Postnikoff:2004}.  Despite this, little is known about the pinwheel diffraction, except that it is rotationally invariant with a Bragg peak of unit intensity at the origin.
	
	The pinwheel tiling is a non-periodic tiling of $\RR^2$ by a right triangle with side lengths 1, 2, and $\sqrt{5}$. It is an inflation tiling constructed via the subdivision rule shown in Figure~\ref{fig:pinwheel-sub}.  More specifically, starting from an initial triangle, one iteratively applies an inflation by $\sqrt{5}$ and subdivides each tile into $5$ smaller, congruent triangles according to the subdivision rule.  For the pinwheel tiling, there is a canonical choice of a distinguished point within each tile, and together these points form the usual Delone set associated with the pinwheel tiling.
	A patch of the pinwheel tiling and its Delone set is shown in Figure~\ref{fig:pinwheel-patch}.
	\begin{figure}[ht]
		\begin{center}     \includegraphics{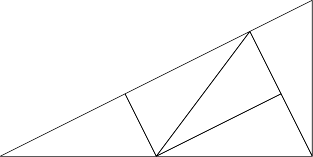}
		\end{center}
		\caption{The pinwheel subdivision rule.}
		\label{fig:pinwheel-sub}
	\end{figure}
	\begin{figure}[ht]
		\begin{center}     
			\includegraphics[width=8cm]{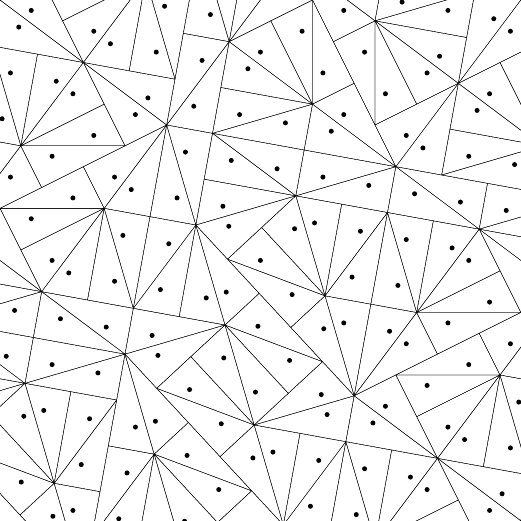}
		\end{center}
		\caption{The pinwheel tiling and its associated Delone set.}
		\label{fig:pinwheel-patch}
	\end{figure}
	
	The statistical circular symmetry of the pinwheel tiling is due to the key angle~$\arctan(\frac{1}{2})$, which is incommensurate with $\pi$.
	More generally, for primitive substitution tilings in $\RR^2$, statistical circular symmetry is equivalent to existence of a level-$n$ ($n\geq 1$) supertile containing two copies
	of the same prototile differing in orientation by an angle $\alpha \notin \pi \QQ$ (see \cite[Proposition~3.4 and Theorem~6.1]{Frettloh:08}). The essential reason for this fact is 
	that the map $x\to x+ \alpha$ 
	specifies an irrational rotation on the torus $S^1$, and by a theorem of Weyl \cite{Weyl:16}, the orbit of an irrational rotation is uniformly distributed on $S^1$. 
	
	In this paper, we are interested in the rate of convergence of the distribution of angles 
	to the uniform distribution, i.e., the discrepancy. It is
	well-known that $x\to x+ \alpha \pmod{1}$ attains the smallest possible discrepancy up to constant factors 
	when $\alpha$ is badly-approximable, 
	i.e., when its partial quotients are bounded. Moreover, if this bound is small, then the above constant also becomes small (see ~\cite[Chapter~2,~Theorem~3.4]{Kuipers-Niederreiter:74}). Badly approximable angles often appear in phyllotaxis. One such example is the golden angle $\pi \omega$
	where
	$$
	\omega=\frac{\sqrt{5}-1}{2}=
	\cfrac{1}{1+\cfrac{1}{1+\cfrac{1}{\ddots}}}
	=[1,1,\dots] \,.
	$$
	The partial quotients of $\omega$ are minimal, and therefore, the irrational rotation by $\pi\omega$ leads to the fastest convergence to the uniform distribution.
	
	In this regard, pinwheel tiling is not ideal. There are currently no known bounds for the partial quotients of 
	$$
	\frac{\arctan(1/2)}{\pi}=[6, 1, 3, 2, 5, 1, 6, 5,\dots].
	$$
	Due to the Gelfond-Schneider Theorem, it is known that $\arctan(1/2)/\pi$ is transcendental.  In particular, this implies that its expansion is not eventually periodic. Though these first several terms are fairly small, one can find large partial quotients $583, 1990, 116880, 213246\dots$ in its expansion at positions $53, 1171, 4806, 109153, \dots$. 
	Since the set of badly approximable numbers has measure zero (see, for example, \cite[Chapter 11, Theorem 196]{HW} or    \cite[Chapter 2, Theorem 29]{Khinchin:97}),
	it is natural to guess that $\arctan(1/2)/\pi$ is \emph{not} badly approximable.
	Further, by ergodicity of the continued fraction map, almost all numbers are normal with respect to the Gauss measure \cite{Khinchin:97,KN:00}, and consequently are not badly approximable.
	Note also that the right angle $\pi/2$ that appears in the pinwheel tiling
	is the antipode of the badly approximable angles. 
	
	Similar to the pinwheel tiling, the key angles for the other aforementioned tilings with statistical circular symmetry are also not likely to be badly approximable.
	Motivated by this, we construct new tilings and associated Delone sets by triangles where every angle is the product of $\pi$ and a badly approximable number.  We start from the subdivision rule shown in Figure~\ref{fig:subdivision-rule-new}.  
	\begin{figure}[ht]
		\centering
		\includegraphics[width=9 cm]{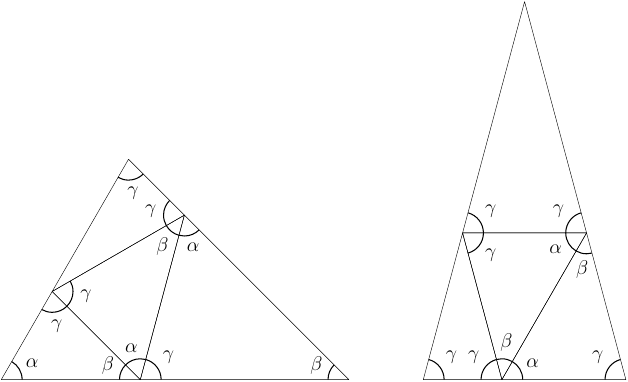}
		\caption{Subdivision rule for triangles with angles $\alpha$, $\beta$, $\gamma$.  The triangle on the left is scalene, and the triangle on the right is isosceles.  This rule is valid for any solutions of~$\alpha+\beta+\gamma=\pi$.}
		\label{fig:subdivision-rule-new}
\end{figure}
This subdivision rule has the special property that the angles~$\alpha,\beta,\gamma$ can be chosen to be \emph{any} angles satisfying $\alpha + \beta + \gamma = \pi$.

In particular, if one can choose $\alpha,\beta,\gamma$
so that~$\alpha/\pi, \beta/\pi$ and $\gamma/\pi$ 
are badly approximable numbers, then the remaining angle $\pi - 2\gamma$ is 
also a badly approximable multiples of $\pi$.

This leads us to our target equation
$$
x+y+z=1 \,,
$$
where $x, y, z$ are badly approximable numbers and $\alpha = \pi x, \beta = \pi y, \gamma = \pi z$ are the angles of the corresponding triangle.  We are especially interested in solutions such that the partial quotients of $x, y, z$ are small by lexicographical ordering. In this case, we refer to the triangle with angles $\pi x, \pi y, \pi z$ as an \emph{optimal badly approximable triangle}.



It is easy to see that if each term in the continued fraction expansion of 
$x,y,z$ does not exceed two, the equation $x+y+z=1$ has no solution. 
Therefore, we seek a solution $x,y,z$ such that, for each of these numbers, the first partial quotient does not exceed three, and the remaining quotients are no greater than two.  To our surprise, we can show that the equation $x+y+z=1\ (x\le y\le z)$ has exactly two solutions under this restriction:
$$
x=2-\sqrt{3}=[3,1,2,1,2,1,2\ldots],\ y=z=\frac{\sqrt{3}-1}2=[2,1,2,1,2,1,\ldots]\,,
$$
and
$$
x=y=\frac{2-\sqrt{2}}2=[3,2,2,2,2,2,\ldots],\ z=\sqrt{2}-1=[2,2,2,2,2,\ldots]\, ;
$$
see Theorem~\ref{Main}.
The proof of this fact requires careful case analysis on infinitely many sub-cases. 
Based on this main result, we can then easily conclude that the equation $x+y=z\ (x\le y)$ has exactly four solutions under the same conditions; see Theorem~\ref{Main2}. Furthermore, our method gives uncountably many explicit solutions when the partial quotients of $x,y,z$ do not exceed three; see Theorem~\ref{Main3}.  

Combining these results on badly approximable numbers with the subdivision rule of Figure~\ref{fig:subdivision-rule-new}, we obtain Delone sets associated with tilings that have optimal statistical circular symmetry.  More specifically, the Delone sets are produced from  optimal badly approximable triangles, so that the discrepancy is minimized.


To construct our Delone sets, we largely follow the threshold method for multiscale substitution schemes considered in \cite{Smi-Solo:21}, but we use contractions
described by a graph directed iterated function system to give a concise presentation. The main idea is to subdivide the triangles until the areas reach a given threshold, and then renormalize them to obtain larger and larger patches.  By choosing a suitable point within each triangle (e.g. the centroids), we get a sequence of finite point sets.  We prove the existence of a Delone limit set for this sequence in the \emph{Chabauty--Fell topology} \cite{Chabauty:50,Fell:62} (see Theorem~\ref{thm:convergence}).  A patch of a Delone set obtained from the subdivision rule in Figure~\ref{fig:subdivision-rule-new} for using optimal badly approximable triangles is shown in Figure~\ref{fig:optimal1-patch}.  
\begin{figure}[ht]
	\begin{center}     
		\includegraphics[width=8cm]{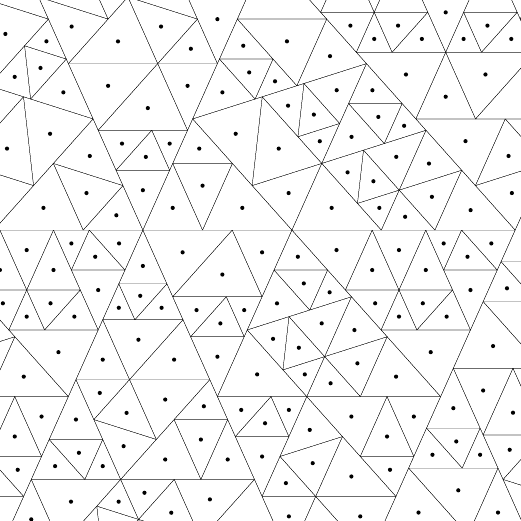}
	\end{center}
	\caption{A new tiling by optimal badly approximable triangles and its associated Delone set, constructed via the subdivision rule shown in Figure~\ref{fig:subdivision-rule-new} with $\alpha = (2-\sqrt{3})\pi$ and~${\beta=\gamma=\frac{(\sqrt{3}-1)\pi
			}{2}}$.   }
	\label{fig:optimal1-patch}
\end{figure}
	
	The paper is organized as follows. In Section~\ref{sec:main-results-1}, we provide the required background and definitions, and state our main results on badly approximable numbers.  In Section~\ref{sec:main-results-2}, we describe our construction of Delone sets using graph directed iterated function systems. In Section~\ref{sec:specific}, we return to the original motivation and discuss the Delone sets obtained from the subdivision rule shown in Figure~\ref{fig:subdivision-rule-new} for the optimal badly approximable triangles associated with Theorem~\ref{Main}. Then, in  Section~\ref{sec:proof_main123}, we prove Theorem~\ref{Main}, Theorem \ref{Main2} and Theorem~\ref{Main3}. Finally, in Section~\ref{sec:open}, we give several open problems.

	\section{Solving \texorpdfstring{$x+y+z=1$}{x+y+z=1} in  badly approximable numbers}\label{sec:main-results-1}
	
	In this section, we will state our main results on badly approximable numbers.
	Their proofs are found in Section \ref{sec:proof_main123}.
	Let us start some definitions.
	
	\begin{defn}
An irrational number $x \in (0,1)$ is called \emph{badly approximable} if the partial quotients in the continued fraction expansion
$$
x=[a_1(x),a_2(x),\dots]=\cfrac 1{a_1(x)+\cfrac 1{ a_2(x)+ \cfrac 1{\ddots}}}\,, \quad a_j(x) \in \mathbb{Z}_+\,, \ j=1,2,\ldots \,,
$$
are bounded, i.e.\ if $\sup_{k \geq 1}a_k(x)<\infty$.
\end{defn}
Equivalently, a number $x\in (0,1)$ is badly approximable if and only if there exists some $\varepsilon>0$ with the property that \begin{equation*}
\left|x-\frac{p}{q}\right|\geq \frac{\varepsilon}{q^2} \,,
\end{equation*}
for all rational numbers $\frac{p}{q}$; see \cite[Chapter 11]{HW} or \cite[Theorem 23]{Khinchin:97}.
For $x=[a_1(x),a_2(x),\dots]\in (0,1)$, by using the Gauss map
$$
T(x)=\frac 1x -\left\lfloor \frac 1x \right\rfloor\,,
$$
we have
$$
T^{k-1}(x)=[a_{k}(x),a_{k+1}(x),a_{k+2}(x),\dots] \,,
$$
and $a_k(x)=\lfloor 1/T^{k-1}(x) \rfloor$ for all $k\geq 1$.

\begin{defn}
A continued fraction $x = [a_1,a_2,\dots]\,$ is \textit{eventually periodic} if there are integers $N\geq 0$ and $k\geq 1$ with $a_{n+k}=a_n$ for all $n \geq N$.
Such a continued fraction will be written
\[
x = [a_1,\dots,a_{N-1},\overline{a_N,\dots,a_{N+k-1}}] \,.
\]
\end{defn}
We use the notation $(a_N,\dots,a_{N+k-1})^\ell$ to denote the repetition of the numbers $a_N,\dots,a_{N+k-1}$ in the continued fraction $\ell\geq 0$ many times. We write $(a_j)^\ell$ for the repetition of a single number $a_j$.
For convenience, in the case where $x\in(0,1)\cap\QQ$ we use the notation
\[
x = [a_1,a_2,\dots,a_n,\infty] =\frac{1}{a_1+\frac{1}{a_2+\frac{1}{\ddots + \frac{1}{a_n}}}}\,.
\]
\begin{defn}
Define the \textit{cylinder set} of $b_1,\dots,b_n\in\mathbb{N}$ by
\[
I(b_1,\dots,b_n)= \{x\in(0,1) \,:\, x=[x_1,x_2,\dots]\,, x_i=b_i\ for\ 1 \leq i\leq n\}\,.
\]
\end{defn}
The set $I(b_1,\dots , b_n)$ is an interval with endpoints
\[
\frac{P_n+P_{n\blue{-}1}}{Q_n+Q_{n\blue{-}1}}\quad and\quad \frac{P_n}{Q_n} \,,
\]
for $n\geq 1$, where
$$
P_n=b_nP_{n-1}+P_{n-2}\,,\quad Q_n=b_nQ_{n-1}+Q_{n-2} \,,
$$
with
\[
\begin{pmatrix}
P_{-1} & P_0\\
Q_{-1} & Q_0
\end{pmatrix}=
\begin{pmatrix}
1 & 0\\
0 & 1
\end{pmatrix}\,.
\]

Let us define our linear problem for badly approximable numbers
more precisely.
An irrational number $x\in (0,1)$ is $B$-bad if $a_k(x)\le B$ holds
for all $k \geq 1$. Let $\B_B$ be the set of all $B$-bad numbers in $(0,1)\backslash \QQ$. For $j\ge 0$, we define the set
$$
\B_{B,j}= \B_{B+1} \cap T^{-j}(\B_B) \,,
$$
i.e., $\B_{B,j}$ is the set of irrational
numbers which satisfy
\begin{equation*}
\begin{cases}
	a_k\le B+1 & k \leq j\\
	a_k\le B & k > j \,.
\end{cases}
\end{equation*}
Clearly, we have
$$\B_B=\B_{B,0}\subset \B_{B,1} \subset \B_{B,2} \subset \cdots\,.$$
Further, we define $\B^*_B=\bigcup_{j=0}^{\infty} \B_{B,j}$ to be the set of eventually $B$-bad numbers in $\B_{B+1}$. In this paper, we are interested in the additive structure of $\B_{B,j}$ and $\B^*_B$. We begin with a simple lemma.

\begin{lem}
\label{Triv}
\emph{
	For $x=[a_1,a_2,a_3,\dots]\in (0,1)$, we have $$
	1-x=\begin{cases} [1,a_1-1,a_2,a_3,\dots] & a_1\ge 2\\
		[1+a_2,a_3,\dots] & a_1=1\,.\end{cases}
	$$
}
\end{lem}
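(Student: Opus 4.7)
The plan is a direct computation from the definition of the continued fraction expansion, split into the two stated cases. Throughout, I will use the abbreviation $y = T(x) = [a_2, a_3, \dots] \in [0,1)$ (or $\{0\}$ in the rational case), so that $x = 1/(a_1+y)$.

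For the case $a_1 \geq 2$, I would write
$$
1-x \;=\; 1 - \frac{1}{a_1+y} \;=\; \frac{(a_1-1)+y}{1+(a_1-1)+y} \;=\; \cfrac{1}{1+\cfrac{1}{(a_1-1)+y}}\,.
$$
Since $a_1-1 \geq 1$, this is a legitimate continued fraction expansion, and recognizing $y = [a_2, a_3, \dots]$ immediately gives $1-x = [1, a_1-1, a_2, a_3, \dots]$, as required.

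For the case $a_1 = 1$, set $z = T^2(x) = [a_3, a_4, \dots]$, so that $y = 1/(a_2 + z)$ and $x = 1/(1+y)$. A direct computation yields
$$
1 - x \;=\; \frac{y}{1+y} \;=\; \cfrac{1}{1+\cfrac{1}{y}} \;=\; \cfrac{1}{(1+a_2)+z}\,,
$$
which is the continued fraction $[1+a_2, a_3, a_4, \dots]$. This handles the second case.

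There is no serious obstacle here, as the lemma is essentially an algebraic identity. The only minor points to check are that the resulting partial quotients are positive integers (the assumption $a_1 \geq 2$ in the first case ensures $a_1 - 1 \geq 1$, and in the second case $1+a_2 \geq 2$ automatically) and that the two displayed formulas correctly treat the rational boundary case via the $\infty$-convention introduced just before the lemma; both of these follow with no additional work from the computations above.
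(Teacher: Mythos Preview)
Your proof is correct and follows essentially the same route as the paper: the paper writes the single identity $1-x=\dfrac{1}{1+\frac{1}{a_1-1+y}}$ with $x=1/(a_1+y)$ and says the result follows, which is exactly your Case~$a_1\ge 2$ computation, and your Case~$a_1=1$ is just the specialization of that identity (since then $a_1-1+y=y=1/(a_2+z)$). You have simply made the two cases explicit where the paper leaves them implicit.
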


\begin{proof}
Putting $x=1/(a_1+y)$ with $y\in (0,1)$, we see that
$$
1-x=\cfrac {1}{1+\frac 1{a_1-1+y}} \,,
$$
from which the result easily follows.
\end{proof}

\begin{cor}\label{cor:Trivial}
\emph{
	An irrational number $x$ is in $\B_{2,1}$ if and only if $1-x$ is also in $\B_{2,1}$.
}
\end{cor}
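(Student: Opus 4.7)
The plan is to apply Lemma \ref{Triv} directly, since membership in $\B_{2,1}$ is a condition only on the first few partial quotients: $x\in\B_{2,1}$ precisely when $a_1(x)\le 3$ and $a_k(x)\le 2$ for every $k\ge 2$. Because the involution $x\mapsto 1-x$ preserves irrationality and is its own inverse, it suffices to prove the implication in one direction.

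Suppose $x=[a_1,a_2,a_3,\dots]\in \B_{2,1}$. I would split on the value of $a_1$.  In the case $a_1\ge 2$, Lemma \ref{Triv} gives $1-x=[1,a_1-1,a_2,a_3,\dots]$, so the partial quotients of $1-x$ are $1,\, a_1-1,\, a_2,\, a_3,\dots$. Since $a_1\le 3$ we have $a_1-1\le 2$, and all the $a_k$ for $k\ge 2$ are at most $2$ by hypothesis; together with the leading $1\le 3$, this verifies $1-x\in\B_{2,1}$.  In the case $a_1=1$, Lemma \ref{Triv} yields $1-x=[1+a_2,a_3,a_4,\dots]$; here $1+a_2\le 1+2=3$, and $a_k\le 2$ for $k\ge 3$, so once again $1-x\in\B_{2,1}$.

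The converse direction is immediate from the identity $1-(1-x)=x$ applied to the forward direction, completing the biconditional. There is no real obstacle here: the content is purely bookkeeping on the first two partial quotients, and the only thing to watch is the boundary case $a_1=1$, where the length of the continued fraction effectively shifts and one must check that the bound $a_1(1-x)\le B+1=3$ still holds — this is exactly where the slack provided by the enlarged bound on the first quotient in the definition of $\B_{B,j}$ is used.
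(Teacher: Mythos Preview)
Your proof is correct and follows precisely the intended route: the paper states this as an immediate corollary of Lemma~\ref{Triv} without giving an explicit argument, and your case split on $a_1$ is exactly how one unpacks that lemma to verify the $\B_{2,1}$ constraints. The observation that the symmetry of $x\mapsto 1-x$ reduces the biconditional to one direction is also appropriate.
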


\begin{remark}
The property of $\B_{2,1}$ described in Corollary~\ref{cor:Trivial} does not hold in $\B_2$ or in $\B_{2,j}$ for any~$j\geq 2$.
\end{remark}

\begin{remark}\label{rem:no-B2-solution}
Lemma~\ref{Triv} shows that the equation
$
x+y=1\ (x,y\in \B_{2},\ x\le y)
$
is trivially solved and has the set of solutions
\[
\{ (x,1-x) \ |\ x\in \B_{2}\cap [0,1/2) \} \,.
\]
In particular, the equation has uncountably many different solutions.  However, our equation of interest $x+y+z=1$ has no solutions in $\B_2$.  Indeed, if $x,y,z\in \B_2$, then we also have $x,y,z \in I(1) \cup I(2) = [\frac{1}{3},1)$. However, if we also have $x+y+z=1$, then the only possible solution is $x=y=z=\frac{1}{3}\in\mathbb{Q}$, which contradicts irrationality of $x,y,z\in\B_2$.
\end{remark}

Our main results are as follows:

\begin{thm}\label{Main}
\emph{
	The equality
	$
	x+y+z=1\ (x,y,z\in \B_{2,1},\ x\le y\le z)
	$
	has exactly two solutions
	$$
	x=2-\sqrt{3}=[3,\overline{1,2}],\ y=z=\frac{\sqrt{3}-1}2=[\overline{2,1}]\,,
	$$
	and
	$$
	x=y=\frac{2-\sqrt{2}}2=[3,\overline{2}],\ z=\sqrt{2}-1=[\overline{2}]\,.
	$$
}
\end{thm}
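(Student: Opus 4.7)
The plan is to peel off partial quotients one by one, using interval estimates on cylinder sets together with the affine constraint $x+y+z=1$. Because each cylinder $I(b_1,\dots,b_n)$ is an interval whose length decays geometrically, and the partial quotients satisfy $a_1\le 3$ and $a_k\le 2$ for $k\ge 2$, at each level only a few triples $(a_n(x),a_n(y),a_n(z))\in\{1,2\}^3$ can survive the condition that the sum of the three associated cylinder intervals contain $1$.

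First I would fix the initial partial quotients. From $x\le y\le z$ and $x+y+z=1$ we get $x\le 1/3\le z$, and the bound $a_1\le 3$ in $\B_{2,1}$ together with irrationality forces $a_1(x)=3$, so $x\in (1/4,1/3)$. Then $z\le (1-x)/2<3/8<1/2$, hence $a_1(z)=2$, and $y\le z<1/2$ gives $a_1(y)\in\{2,3\}$. This splits the problem into the two top-level branches $(a_1(x),a_1(y),a_1(z))\in\{(3,2,2),(3,3,2)\}$.

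For each branch I would run an induction on the depth $n\ge 2$, keeping track of the admissible prefixes. The inductive step enumerates the eight triples $(a_n(x),a_n(y),a_n(z))\in\{1,2\}^3$, computes the extended cylinder intervals via the recurrence $P_n=b_nP_{n-1}+P_{n-2}$, $Q_n=b_nQ_{n-1}+Q_{n-2}$, and discards any triple whose three intervals do not admit a sum equal to $1$. The expectation is that only one or two triples survive at each depth, and that the surviving triples settle into a periodic pattern $\overline{2,1}$ or $\overline{2}$, forcing the continued fraction expansions of $x,y,z$ to coincide with those of the two claimed quadratic irrationals. The two candidates are then verified directly: $(2-\sqrt{3})+2\cdot(\sqrt{3}-1)/2=1$ and $2\cdot(2-\sqrt{2})/2+(\sqrt{2}-1)=1$, with the continued fractions checked via Lemma~\ref{Triv}.

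The main obstacle is bookkeeping through the deep case tree and ensuring the interval inequalities are tight enough to eliminate near-miss branches. The hardest cases are those in which the summed cylinder interval barely contains $1$; these cannot be resolved at the current depth and require going one or two levels deeper before one can confirm a surviving periodic pattern or discard the branch. A secondary subtlety is that only the first partial quotient is allowed to equal $3$: after one application of the Gauss map, the tails $T(x),T(y),T(z)$ lie in the narrower class $\B_2$, whose tighter cylinder intervals are what ultimately prevents any aperiodic branch from surviving and forces the two periodic patterns $\overline{2,1}$ and $\overline{2}$ identified in the statement.
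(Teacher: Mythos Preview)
Your overall strategy---peeling off partial quotients via cylinder-interval arithmetic under the constraint $x+y+z=1$---is the same approach the paper takes, and your reduction to the two top-level branches $(a_1(x),a_1(y),a_1(z))\in\{(3,2,2),(3,3,2)\}$ matches the paper's Cases~2 and~3.

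The genuine gap is in the $(3,3,2)$ branch. Your expectation that ``only one or two triples survive at each depth'' and that near-misses resolve after ``one or two levels deeper'' is too optimistic: the paper must analyse $17$ sub-cases for each parity of $n$, some of which require descending five further partial quotients beyond the current $(2)^n$ block (e.g.\ the pair $I(3,(2)^n,1,1,2,1,1)$ versus $I(3,(2)^n,2,1,2,1,1)$). More importantly, your plan offers no mechanism for the induction to stabilize: a naive depth-by-depth enumeration over $\{1,2\}^3$ never terminates, because one must show that \emph{for every} $n$ the only surviving possibility is $x,y\in I(3,(2)^{n+1})$. The paper supplies two ingredients you are missing: a \emph{forbidden patterns} lemma (Lemma~\ref{lem:x3-y3-z-1}) that converts an interval estimate on $z=1-x-y$ directly into the conclusion $z\notin\B_2$, and closed-form expressions for the cylinder endpoints in terms of $(1\pm\sqrt{2})^n$, so that the same $17$-entry elimination table is valid uniformly in $n$. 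The $(3,2,2)$ branch is also handled more cleanly in the paper via a standalone lemma (Lemma~\ref{xyz}) showing that $x+y=z$ has a unique solution in $\B_2$, which avoids any further cylinder refinement there.
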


By using Lemma \ref{Triv}, we may rephrase Theorem \ref{Main} as follows:

\begin{thm}
\label{Main2}
\emph{
	The equality
	$
	x+y=z\ (x,y,z\in \B_{2,1}\,, x \leq y)
	$
	has exactly four solutions
	$$
	x=2-\sqrt{3}=[3,\overline{1,2}],\ y=\frac{\sqrt{3}-1}2=[\overline{2,1}],\ z=\frac{3-\sqrt{3}}{2}=[1,1,1,\overline{2,1}]\,,
	$$
	$$
	x=y=\frac{\sqrt{3}-1}2=[\overline{2,1}],\ z=\sqrt{3}-1=[\overline{1,2}]\,,
	$$
	$$
	x=y=\frac{2-\sqrt{2}}2=[3,\overline{2}],\ z=2-\sqrt{2}=[1,1,\overline{2}]\,,
	$$
	and
	$$
	x=\frac{2-\sqrt{2}}2=[3,\overline{2}],\ y=\sqrt{2}-1=[\overline{2}],\ z=\frac{\sqrt{2}}{2}=[1,\overline{2}]\,.
	$$
}
\end{thm}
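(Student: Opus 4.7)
The plan is to reduce the equation $x+y=z$ to the symmetric equation $x+y+w=1$ already solved in Theorem~\ref{Main}, via the substitution $w:=1-z$. By Corollary~\ref{cor:Trivial}, $z\in \B_{2,1}$ if and only if $w\in \B_{2,1}$, so the hypotheses transfer, and $(x,y,w)$ is a triple in $\B_{2,1}$ summing to $1$. Theorem~\ref{Main} then forces the unordered multiset $\{x,y,w\}$ to be one of
$$
\bigl\{2-\sqrt{3},\,\tfrac{\sqrt{3}-1}{2},\,\tfrac{\sqrt{3}-1}{2}\bigr\} \quad \text{or} \quad \bigl\{\tfrac{2-\sqrt{2}}{2},\,\tfrac{2-\sqrt{2}}{2},\,\sqrt{2}-1\bigr\}.
$$

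Next, I would enumerate, within each multiset, which element plays the role of $w$. Each multiset has exactly two distinct values, so there are two inequivalent choices for $w$; once $w$ is selected, the remaining two elements form $\{x,y\}$ and the requirement $x\le y$ determines the ordering uniquely. This produces $2+2=4$ ordered triples $(x,y,z)$, matching the count claimed in the statement.

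Finally, to produce the continued-fraction expansions of $z=1-w$ listed in the theorem, I would apply Lemma~\ref{Triv}. In each of the four cases, $w$ has first partial quotient equal to either $2$ or $3$, so Lemma~\ref{Triv} gives a direct formula for $1-w$, yielding the expansions $[\overline{1,2}]$, $[1,1,1,\overline{2,1}]$, $[1,1,\overline{2}]$, and $[1,\overline{2}]$ for the four corresponding values of $z$.

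There is no genuine obstacle once Theorem~\ref{Main} is in hand; the proof is essentially a bookkeeping exercise built around the involution $z\mapsto 1-z$. The only subtlety worth emphasizing is that the condition $x\le y$ appearing in Theorem~\ref{Main2} is strictly weaker than the full ordering $x\le y\le z$ of Theorem~\ref{Main}, which is precisely why each of the two unordered solutions of the symmetric equation gives rise to two (rather than one) ordered solutions of $x+y=z$.
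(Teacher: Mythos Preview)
Your proposal is correct and follows essentially the same route as the paper: both reduce $x+y=z$ to the symmetric equation $x+y+w=1$ via $w=1-z$, invoke Corollary~\ref{cor:Trivial} to keep $w\in\B_{2,1}$, appeal to Theorem~\ref{Main} for the two unordered solution multisets, and then enumerate the distinct choices of which entry plays the role of $w$ (equivalently, the paper exploits the repeated entry in each solution of Theorem~\ref{Main} to get two solutions from each). Your multiset bookkeeping and use of Lemma~\ref{Triv} for the continued-fraction expansions of $z$ match the paper's argument in all essential respects.
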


In 1954, Hall \cite{Hall:47}
proved that $\B_4+\B_4$ contains an interval. 
Subsequently, Freiman~\cite{Freiman:73} and Schecker~\cite{Schecker:77} showed that $\B_3+\B_3$ contains an interval.
This implies that the equalities
$x+y+z=1\ (x,y,z\in \B_{3},\ x\le y\le z)$
and
$x+y=z\ (x,y,z\in \B_{3},\ x\le y)$
both have uncountably many solutions.

These results were proven using a criterion for Cantor sets which, when satisfied, implies that the arithmetic sum of two Cantor sets contains an interval.
By this method,
it may be challenging to make the solutions explicit, i.e.\ we know that one can choose
any $z\in \B_3$ in the interval,
but it is not clear which numbers $x,y\in \B_3$ satisfy the equation.
In contrast, our method gives explicit solutions in $\B^*_{2}$ and~$\B_3$:

\begin{thm}
\label{Main3}
\emph{
	The equality
	$
	x+y+z=1\ (x,y,z\in \B^*_{2},\ x\le y\le z)
	$
	has infinitely many solutions.
	Furthermore, uncountably many solutions of the equality
	$
	x+y+z=1\ (x,y,z\in \B_{3},\ x\le y\le z)
	$
	can be constructed explicitly.
}
\end{thm}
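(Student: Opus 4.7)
My plan is to leverage the two explicit $\B_{2,1}$-solutions from Theorem~\ref{Main} as seeds and generate broader families in $\B^*_2$ and $\B_3$ by controlled modifications of their continued fraction expansions.

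For the first assertion (infinitely many solutions in $\B^*_2$), I would fix one component at $z_0 = (\sqrt{3}-1)/2 = [\overline{2,1}]$ and search for pairs $(x,y) \in (\B^*_2)^2$ satisfying $x+y = (3-\sqrt{3})/2$. For each $n \geq 1$, I would prescribe a finite prefix of length $n$ with entries in $\{1,2,3\}$ for the continued fraction of $y$ (followed by a $\overline{2,1}$-tail), solve for $x = (3-\sqrt{3})/2 - y$, and test whether $x$ belongs to some $\B_{2,j}$. Because the cylinder sets $I(b_1,\ldots,b_n)$ have computable rational endpoints via the $P_n, Q_n$ recurrence recalled in Section~\ref{sec:main-results-1}, at each length $n$ this reduces to a finite check. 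Producing at least one admissible prefix at each length $n$ would yield an infinite family of distinct solutions in $\B^*_2$.

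For the second assertion (uncountably many explicit solutions in $\B_3$), my plan is substitutive. I would introduce two finite words $W_0, W_1$ over $\{1,2,3\}$ designed so that any infinite concatenation $W_{\sigma_1} W_{\sigma_2} \ldots$, with $\sigma \in \{0,1\}^{\N}$, yields a continued fraction of an element of $\B_3$. For each sequence $\sigma$, one defines $y_\sigma$ and $z_\sigma$ by such concatenations with matched block indices, chosen so that $y_\sigma + z_\sigma \in \B_3$ and hence $x_\sigma = 1 - y_\sigma - z_\sigma \in \B_3$. The choice of $W_0, W_1$ would be guided by the two Theorem~\ref{Main} solutions: one encodes the $[\overline{2,1}]$-pattern and the other the $[\overline{2}]$-pattern, so that the constant sequences recover the Theorem~\ref{Main} triples while mixed sequences produce new ones. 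Uncountability then follows from the uniqueness of continued fraction expansions: distinct $\sigma$ give distinct triples.

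The main obstacle in both parts is that the continued fraction of $y+z$ is not a continuous function of the continued fractions of $y$ and $z$; a single digit perturbation in $y$ can cascade through the expansion of $1 - y - z_0$. Consequently, each construction must be justified by an explicit algebraic identity relating the convergents of $y$, $z$, and $y+z$, i.e.\ an $\mathrm{SL}_2(\Z)$-computation in the spirit of those implicit in Lemma~\ref{Triv}. The strong periodic structure of the Theorem~\ref{Main} solutions and the bounded partial-quotient regime of $\B_3$ should keep these identities tractable, but organising the verification systematically is the real technical burden.
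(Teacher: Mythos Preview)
Your proposal is a strategy outline rather than a proof, and in both parts the decisive step is left open.

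For the $\B^*_2$ claim, fixing $z_0=(\sqrt{3}-1)/2$ and searching for pairs $(x,y)$ summing to $(3-\sqrt{3})/2$ is unjustified: the ``finite check'' you describe tests whether a given candidate works, but you give no argument that a successful candidate exists at any length $n$, let alone at every $n$. All the additional $\B_{2,2}$-solutions recorded in Section~\ref{sec:open}, and every solution the paper's own proof manufactures, have $z$ varying; there is no evidence that holding $z$ fixed yields infinitely many solutions.

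For the $\B_3$ claim your substitutive idea is closer in spirit, but the design principle you suggest---encoding the two Theorem~\ref{Main} patterns in two words $W_0,W_1$ and setting $x_\sigma=1-y_\sigma-z_\sigma$---is not what drives the paper's argument. The paper works on the diagonal $x=y$ from a single seed and exhibits two explicit M\"obius-type insertions $(x,y,z)\mapsto(X,Y,Z)$ for which the defect factors as
\[
1-X-Y-Z=\frac{(x-y)^2}{(3-2x)(3-2y)(3-x-y)}
\quad\text{and}\quad
1-X-Y-Z=\frac{-5(x-y)^2}{(10x+13)(10y+13)(5x+5y+13)}\,.
\]
The $(x-y)^2$ factor is the entire mechanism: it forces $X+Y+Z=1$ whenever $x=y$, both insertions keep all partial quotients in $\{1,2,3\}$, and both preserve $X=Y$, so the process iterates. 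Iterating one insertion already gives the infinite family in $\B^*_2$; freely interleaving the two (the induced words $\{2,\,11211\}$ in the expansion of $z$ form a code, guaranteeing distinctness) gives the uncountable family in $\B_3$. You correctly flag the need for ``an explicit algebraic identity,'' but discovering identities with this $(x-y)^2$ structure \emph{is} the proof---without them nothing has been shown.
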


\section{Construction of Delone Sets}\label{sec:main-results-2}

In this section, we construct Delone sets starting from an arbitrary subdivision rule described by a graph directed iterated function system satisfying certain basic requirements, all of which are satisfied by our subdivision rule shown in Figure~\ref{fig:subdivision-rule-new}.
Let us begin with some definitions.

\subsection{Delone sets and the Chabauty--Fell topology}
We restrict our attention to $\RR^2$.  Throughout, we denote the Euclidean norm on $\RR^2$ by $\|\cdot\|$, and we write $B(x,r)$ to denote the open ball of radius $r$ centered at $x$, i.e.\ $B(x,r)=\{y\in\RR^2\,:\, \|y-x\|< r\}$. Similarly, we write $\overline{B(x,r)}$ for the closed ball, i.e.\ $\overline{B(x,r)}=\{y\in\RR^2\,:\, \|y-x\| \leq r\}$. We begin by recalling the definition of a Delone set.

\begin{defn}
Let $\Lambda$ be a closed subset of $X\subseteq \RR^2$.
\begin{enumerate}[label=(\roman*)]
	\item If there exists an $r>0$ such that for any $x \in X$, $\card (B(x,r) \cap \Lambda) \leq 1$, then $\Lambda$ is \emph{$r$-uniformly discrete} in $X$.
	\item If there exists an $R>0$ such that for any $x \in X$, $\overline{B(x,R)} \cap \Lambda\neq\emptyset$, then $\Lambda$ is \emph{$R$-relatively dense} in $X$.
	\item We say that $\Lambda$ is a \emph{$(r,R)$-Delone set} in $X$ if $\Lambda$ is both $r$-uniformly discrete in $X$ and $R$-relatively dense in $X$.
\end{enumerate}
\end{defn}
\begin{defn}
Let $X\subseteq \RR^2$ be closed.  We define the following spaces of point sets:
\begin{enumerate}[label=(\roman*)]
	\item Denote by $2^X$ the set of all closed subsets of $X$;
	\item Denote by $\cW_{r,R}(X)$ the set of closed subsets $\Lambda \subseteq \RR^2$ such that $\Lambda$ is an $(r,R)$-Delone set in $X$.
\end{enumerate}
\end{defn}
\begin{remark}
If $X$ and $Y$ are closed subsets of $\RR^2$ and $Y \subseteq X$, then any set which is $r$-uniformly discrete and $R$-relatively dense in $X$ must also be $r$-uniformly discrete and $R$-relatively dense in $Y$.  In particular, we have $\cW_{r,R}(X)\subseteq \cW_{r,R}(Y)$.
\end{remark}

Next, we will construct a sequence of finite point sets and show the existence of a subsequence converging to a Delone set in the \textit{Chabauty--Fell topology} \cite{Chabauty:50,Fell:62}.  This topology is also commonly referred to as the \textit{Chabauty topology} or the \textit{Fell topology}, which are defined more generally in any topological space. In the case of locally compact groups, it is also called the \textit{local rubber topology} \cite{Baake-Lenz:04}. Since we work in $\RR^2$, the Chabauty--Fell topology is metrizable and induced by the following metric (see {\cite[Appendix~A]{Smi-Solo:22}}):
\begin{defn}[Chabauty--Fell Topology]
For each $\Lambda_1,\Lambda_2\in2^{X}$, define
\[
d(\Lambda_1,\Lambda_2) = \inf\ \{\{1\}\cup\{\varepsilon>0\,:\,\Lambda_1\cap B(0,\textstyle{\frac{1}{\varepsilon}}) \subseteq \Lambda_2 + B(0,\varepsilon)\ \text{and}\ \Lambda_2\cap B(0,\textstyle{\frac{1}{\varepsilon}}) \subseteq \Lambda_1 + B(0,\varepsilon)\}\}\,.
\]
The map $d:2^X\times2^X \rightarrow [0,\infty)$ is a metric on $2^X$ inducing the Chabauty--Fell topology.
\end{defn}

\begin{remark}
It has long been known that the space $2^X$ is compact in the Fell topology \cite{Fell:62}.  Furthermore, it is easy to show that the space $\cW_{r,R}(X)$ is also compact in the Fell topology.  As we will use this fact, we give a proof for $X = \RR^2$ in Appendix~\ref{sec:compactness}.
\end{remark}

We will deduce the existence of a subsequence converging to a Delone set using the compactness of $\mathcal{W}_{r,R}(\RR^2)$; however, our sequence will not be contained in $\mathcal{W}_{r,R}(\RR^2)$ because no finite set is relatively dense in $\RR^2$. Thus, we will use the following easy observation about the Chabauty--Fell topology:
\begin{prop}\label{prop:Fell-compact-convergence}
\emph{
	Let $\{\Lambda_n\}_{n \geq 1}$ be a sequence in $2^X$ and $\Lambda \in 2^X$. Let $\{R_n\}_{n \geq 1}$ be a sequence of positive real numbers with $\lim_{n\rightarrow\infty} R_n = \infty$. Then
	$\Lambda_n$ converges to $\Lambda$ if and only if $\Lambda_n \cap B(0,R_n)$ converges to $\Lambda$ in the Chabauty--Fell topology.
}
\end{prop}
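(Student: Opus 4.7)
The plan is to verify both implications directly by unwinding the definition of the Chabauty--Fell metric $d$. The key observation is that for any fixed $\varepsilon \in (0,1)$, once $R_n$ is large enough (specifically $R_n > 1/\varepsilon + \varepsilon$), intersecting $\Lambda_n$ with $B(0,R_n)$ does not alter either of the two inclusions in the definition of $d$ at scale $\varepsilon$, since that definition only sees points inside $B(0,1/\varepsilon)$ and only requires witness points within $\varepsilon$ of such points. So the whole proposition reduces to a matching-up of thresholds, and both directions are completely symmetric in flavor.

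For the forward direction, suppose $\Lambda_n \to \Lambda$. Given $\varepsilon \in (0,1)$, pick $N$ so large that for $n \geq N$ one has $d(\Lambda_n,\Lambda) < \varepsilon/2$ and $R_n > 2/\varepsilon + \varepsilon$. The first condition gives $\Lambda_n \cap B(0,2/\varepsilon) \subseteq \Lambda + B(0,\varepsilon/2)$ and $\Lambda \cap B(0,2/\varepsilon) \subseteq \Lambda_n + B(0,\varepsilon/2)$. Since $R_n > 1/\varepsilon$, we have $(\Lambda_n \cap B(0,R_n)) \cap B(0,1/\varepsilon) = \Lambda_n \cap B(0,1/\varepsilon) \subseteq \Lambda + B(0,\varepsilon)$. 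Conversely, for any $x \in \Lambda \cap B(0,1/\varepsilon)$ there is $y \in \Lambda_n$ with $\|y-x\| < \varepsilon/2$; then $\|y\| < 1/\varepsilon + \varepsilon/2 < R_n$, so $y \in \Lambda_n \cap B(0,R_n)$. This yields $d(\Lambda_n \cap B(0,R_n),\Lambda) \leq \varepsilon$, as required.

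For the reverse direction, suppose $\Lambda_n \cap B(0,R_n) \to \Lambda$. Given $\varepsilon \in (0,1)$, pick $N$ so that for $n \geq N$ one has $d(\Lambda_n \cap B(0,R_n),\Lambda) < \varepsilon$ and $R_n > 1/\varepsilon$. Then $\Lambda_n \cap B(0,1/\varepsilon) = (\Lambda_n \cap B(0,R_n)) \cap B(0,1/\varepsilon) \subseteq \Lambda + B(0,\varepsilon)$, and $\Lambda \cap B(0,1/\varepsilon) \subseteq (\Lambda_n \cap B(0,R_n)) + B(0,\varepsilon) \subseteq \Lambda_n + B(0,\varepsilon)$, giving $d(\Lambda_n,\Lambda) \leq \varepsilon$.

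There is no real obstacle here — the statement is a bookkeeping lemma. The only subtle point is ensuring, in the forward direction, that the witness point $y \in \Lambda_n$ used to approximate a given $x \in \Lambda \cap B(0,1/\varepsilon)$ actually lies inside $B(0,R_n)$, so that it survives the truncation. This forces one to require $R_n$ to be \emph{strictly} bigger than $1/\varepsilon$ by a margin exceeding the approximation scale, which is automatic from $R_n \to \infty$; a small trick like halving $\varepsilon$ before invoking the Cauchy-type hypothesis keeps the bookkeeping clean.
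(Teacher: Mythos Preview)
Your proof is correct and follows essentially the same approach as the paper: both arguments unwind the metric definition directly and hinge on the same key point, namely that once $R_n$ exceeds $1/\varepsilon$ by a margin comparable to $\varepsilon$, the witness point $y\in\Lambda_n$ approximating any $x\in\Lambda\cap B(0,1/\varepsilon)$ automatically lies in $B(0,R_n)$. The only cosmetic difference is that the paper handles this margin by requiring $R_n > 1/\varepsilon + \varepsilon$ while keeping the approximation at scale $\varepsilon$, whereas you halve $\varepsilon$ first; these are interchangeable bookkeeping choices.
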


\begin{proof}
First, assume that $\Lambda_n$ converges to $\Lambda$ in the Chabauty--Fell topology. Then, for any $\varepsilon>0$, there exists a constant $N_1$ such that for any $n \geq N_1$, we have $d(\Lambda_n,\Lambda)<\varepsilon$. That is,
\begin{equation}\label{def_1}
	\Lambda_n \cap B(0,\tfrac{1}{\varepsilon}) \subseteq \Lambda+B(0,\varepsilon) \,,
\end{equation}
and
\begin{equation}\label{def_2}
	\Lambda \cap B(0,\tfrac{1}{\varepsilon}) \subseteq \Lambda_n+B(0,\varepsilon) \,.
\end{equation}
For the above $\varepsilon$, there must exist a constant $N_2$ such that for any $n \geq N_2$, $R_n>\tfrac{1}{\varepsilon} +\varepsilon$. Let $M=\max\{N_1,N_2\}$. Then by \eqref{def_1}, we have
\begin{equation}\label{def_3}
	\Lambda_n \cap B(0,R_n) \cap B(0,\tfrac{1}{\varepsilon}) = \Lambda_n \cap B(0,\tfrac{1}{\varepsilon}) \subseteq \Lambda+B(0,\varepsilon) \quad \forall n \geq M\,.
\end{equation}
Furthermore, we get
\begin{equation}\label{def_4}
	\Lambda \cap B(0,\tfrac{1}{\varepsilon}) \subseteq \Lambda_n \cap B(0,R_n)+B(0,\varepsilon)\,.
\end{equation}
Next, let $x\in \Lambda \cap B(0, \tfrac{1}{\varepsilon})$. By \eqref{def_2}, we have $x \in \Lambda_n + B(0,\varepsilon)$. Thus, there exists some $y\in \Lambda_n$ and some $z\in B(0,\varepsilon)$ such that $x=y+z$.  Since $R_n > \tfrac{1}{\varepsilon} + \varepsilon$ and $x\in B(0, \tfrac{1}{\varepsilon})$ we have
\[
\|y\| \leq \|x\| + \|x-y\| = \|x\| + \|z\| < \frac{1}{\varepsilon} + \varepsilon < R_n\,,
\]
so $y\in \Lambda_n \cap B(0,R_n)$.  In particular, we have $x = y + z \in \Lambda_n \cap B(0,R_n) + B(0,\varepsilon$).

By \eqref{def_3} and \eqref{def_4}, we have shown that $d(\Lambda_n \cap B(0,R_n),\Lambda)<\varepsilon$ for all $n \geq M$, so  $\Lambda_n \cap B(0,R_n)$ converges to $\Lambda$ in the Chabauty--Fell topology.

Conversely, suppose that $\Lambda_n \cap B(0,R_n)$ converges to $\Lambda$ in the Chabauty--Fell topology. For any $\varepsilon>0$, there exists a constant $N_1$ such that for any $n \geq N_1$, we have $d(\Lambda_n\cap B(0,R_n),\Lambda)<\varepsilon$. For the above $\varepsilon$, there must exist a constant $N_2$ such that $R_n>\tfrac{1}{\varepsilon}$ for any $n \geq N_2$. Let $M=\max\{N_1,N_2\}$.  Then, for all $n \geq M$, we get
\[
\Lambda_n\cap B(0,\tfrac{1}{\varepsilon})=\Lambda_n\cap B(0,R_n) \cap B(0,\tfrac{1}{\varepsilon}) \subseteq \Lambda+B(0,\varepsilon) \,,
\]
and
\[
\Lambda \cap B(0,\tfrac{1}{\varepsilon}) \subseteq \Lambda_n\cap B(0,R_n) + B(0,\varepsilon) \subseteq \Lambda_n+B(0,\varepsilon) \,,
\]
so $d(\Lambda_n, \Lambda)< \eps$. Therefore, $\Lambda_n $ converges to $\Lambda$ in the Chabauty--Fell topology.
\end{proof}
In other words, the convergence of a sequence in the Chabauty--Fell topology is equivalent to the convergence of its restriction to larger and larger patches. Provided that we can extend our finite point sets to elements of $\mathcal{W}_{r,R}(\RR^2)$, we can use this fact and the compactness of $\mathcal{W}_{r,R}(\RR^2)$ to prove the existence of a Delone limit set.

\subsection{Threshold method}\label{sec:threshold}
To any subdivision rule on a finite set of tiles in $\RR^2$, there is an associated graph-directed iterated function system (GIFS) consisting of similitudes.  In this section, starting from a GIFS of this type, we construct a corresponding Delone set.  In particular, starting from some initial tile, we apply the GIFS until the area of every tile is below some threshold $\varepsilon > 0$. Once there are no tiles of area greater than $\varepsilon$, we inflate the finite patch by $\frac{1}{\sqrt{\varepsilon}}$.  We call this an \textit{$\varepsilon$-rule}, which is defined more precisely below.

\subsubsection{\textbf{GIFS}}
Let $(V,\Gamma)$ be a directed graph with vertex set $V$ and directed-edge set $\Gamma$ where
both $V$ and $\Gamma$ are finite. Denote the set of edges from $j$ to $i$ by $\Gamma_{j,i}$ and assume that for any $j\in V$, there is at least one edge starting from vertex $j$. Furthermore, assume that for each edge $e\in\Gamma$, there is a corresponding contractive  similitude $g_e:\mathbb{R}^n\rightarrow\mathbb{R}^n$.
We call $(g_e)_{e\in \Gamma}$ a \emph{graph-directed IFS (GIFS)} (see \cite{MW88}). The invariant sets of this GIFS, also called \emph{graph-directed sets},
are the unique non-empty compact sets $(T_j)_{j\in V}$ satisfying
\begin{equation}\label{GIFS}
T_j=\bigcup\limits_{i\in V}\bigcup\limits_{e\in\Gamma_{j,i}}g_e(T_i),\quad  j\in V \,.
\end{equation}

Let $\mathcal{F} = \{T_1, T_2, \dots, T_m\}$ denote the invariant sets of a GIFS $(g_e)_{e\in \Gamma}$ with vertex set $V=\{1,2,\dots,m\}$. We make the following additional assumptions on the GIFS:
\begin{enumerate}[label=\arabic*.]
\item  Each tile in $\cF$ has a nonempty interior.
\item  Each tile in $\cF$ satisfies the open set condition, i.e.\ each union in \eqref{GIFS} has no interior-overlap.
\item The directed graph $(V,\Gamma)$ is strongly connected, i.e.\ for all $i,j \in V$, there is similar copy of $T_i$ in the subdivision of $T_j$.
\end{enumerate}

\subsubsection{\textbf{$\boldsymbol{\varepsilon}$-rule}.}
For convenience, let $\Sigma_{N}$ denote the set of edge sequences of length $N$ on $(V,\Gamma)$.  In other words, $\Sigma_{N}$ is the collection of sequences $e_1,e_2,\dots, e_N \in \Gamma$ such that the composition  $g_{e_1} \circ g_{e_2} \circ \dots \circ g_{e_N}$ is permitted by the GIFS.
We iterate the GIFS starting from an initial tile $T_j\in \cF$ with the following \textit{$\varepsilon$-rule}:

\begin{enumerate}[label=(\roman*)]
\item Fix $0<\varepsilon< 1$.  Given a finite sequence $(e_1, e_2, \dots, e_{m(\varepsilon)}) \in \Sigma_{m(\varepsilon)}$, we
continue applying the GIFS to the tile
$$
T = g_{e_1}\circ g_{e_2} \circ \dots \circ g_{e_{m(\varepsilon)}}(T_j)\,,
$$
while $\text{Area}(T) > \varepsilon$, and stop applying the GIFS  to $T$ when $\text{Area}(T) \leq \varepsilon$.
\item Once the process in (i) terminates, we inflate the resulting collection of tiles by $\frac{1}{\sqrt{\varepsilon}}$.  We denote the resulting finite patch by $\mathcal{P}_{\varepsilon}(T_j)$.
\end{enumerate}

\begin{remark}\label{range_of_area}
Let $|g_e'|$ denote the contraction factor of the similitude $g_e$ in the GIFS. After applying the $\varepsilon$-rule, we are guaranteed to have $\Area(T)\in[a,1]$ for every tile $T\in\cP_{\varepsilon}(T_j)$, where
$$
a=\min\{|g_e'|^2\, :\, e \in \Gamma\} \,.
$$
	
	\end{remark}
	\subsubsection{\textbf{Associated point sets.}}
	Our next step is to define a point set $\Lambda_j(\varepsilon)$ associated with the patch $\cP_{\varepsilon}(T_j)$ by placing one point inside each tile. Since $\cF$ is a finite set, we can choose fixed constants $r_0\,,R_0>0$ which do not depend on $T_j$ and distinguished points
	$\lambda_j^{(0)}\in T_j$ for each $j \in V$ such that
	\begin{equation}\label{r_0R_0}
B(\lambda_j^{(0)},r_0) \subset T_j \subset B(\lambda_j^{(0)},R_0)\,.
\end{equation}
Let $0<\varepsilon<1$ and $T_j \in \cF$. Starting from $\lambda_j^{(0)}$, we construct the elements of $\Lambda_j(\varepsilon)$ recursively as follows.  For each application of the GIFS
$$
T_j=\bigcup\limits_{i\in V}\bigcup\limits_{e\in\Gamma_{j,i}}g_e(T_i)\,,\quad  j\in V \,,
$$
in the $\varepsilon$-rule, we produce new points
\[
\lambda_j^{(n)}=\bigcup\limits_{i\in V}\bigcup\limits_{e\in\Gamma_{j,i}}g_e(\lambda_i^{(n-1)})\,,\quad j\in V \,,
\]
that replace the previous points $\lambda_i^{(n-1)}$.  This process terminates when there are no further applications of the GIFS.  Finally, we inflate the resulting collection of points by $\frac{1}{\sqrt{\varepsilon}}$ to obtain \emph{the associated point sets} $\Lambda_j(\varepsilon)$.

In other words, since any tile $T$ in $\cP_{\varepsilon}(T_j)$ is similar to a unique $T_i\in \cF$, we place exactly one point in $T$ via similarity at the same location as $\lambda_i^{(0)}\in T_i$. Though one can choose any points $\{\lambda_j^{(0)}\}_{j \in V}$ such that \eqref{r_0R_0} holds, for simplicity we choose the centroid of each tile.  In this case, $\Lambda_j(\varepsilon)$ is simply the set of centroids of the tiles in $\cP_\varepsilon(T_j)$. Furthermore, we introduce the additional assumption that
\begin{align}\label{range_of_area_2}
\min\{\Area(T_j) \,:\, T_j\in \cF\}=1,\quad  \max\{\Area(T_j) \,:\, T_j\in \cF\}=S\geq 1 \,,
\end{align}
as this will simplify subsequent proofs.  The value $S$ is related to the  GIFS $(g_e)_{e \in \Gamma}$; however, one can easily modify a given GIFS to satisfy \eqref{range_of_area_2} 
by simultaneously enlarging or shrinking all tiles $T_j$ by a fixed similitude.

\subsubsection{\textbf{Existence of a Delone limit set}}
Next, we consider a decreasing sequence $\{\varepsilon_n\}_{n \geq 1}$ in $(0,1)$ and show that the corresponding sequence of point sets $\{\Lambda_j(\varepsilon_n)\}_{n \geq 1}$ has a subsequence converging to a Delone set in the Chabauty--Fell topology.

\begin{lem}\label{Limit_Delone}
\emph{
	For any decreasing sequence $\{\varepsilon_n\}_{n \geq 1}$ in $(0,1)$ with $\lim_{n\rightarrow \infty} \varepsilon_n = 0$, there exists a sequence $\{K_n\}_{n\geq1}$ of compact sets such that
	\begin{enumerate}[label=(\roman*)]
		\item $K_n \subseteq K_{n+1}$ for all $n\geq1$,
		\item $\cup_{n\geq 1} K_n = \RR^2$, and
		\item $\Lambda_j(\varepsilon_n)\in \cW_{r,R}(K_n)$ for all $n\geq1$, where $r=\sqrt{\tfrac{a}{S}}r_0$ and $R=R_0$.
	\end{enumerate}
	In other words, $\Lambda_j(\varepsilon_n)$ is an $(r,R)$-Delone set in $K_n$ for each $n\geq1$.
}
\end{lem}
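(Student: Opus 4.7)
The plan is to take $K_n$ to be a closed disk, centered at the distinguished point $\lambda_j^{(0)}$, whose radius grows like $1/\sqrt{\varepsilon_n}$. Concretely, set $K_n := \overline{B(\lambda_j^{(0)},\, r_0/\sqrt{\varepsilon_n})}$. With this choice, property (i) is immediate from $\varepsilon_n \geq \varepsilon_{n+1}$, and property (ii) from $\varepsilon_n \to 0$.

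The substance lies in (iii), and the key geometric observation is that every tile $T$ in the inflated patch $\cP_{\varepsilon_n}(T_j)$ is similar to a unique prototype $T_i \in \cF$ with ratio $\rho_T = \sqrt{\Area(T)/\Area(T_i)}$. Combining Remark~\ref{range_of_area} (giving $\Area(T)\in[a,1]$) with the normalization~(\ref{range_of_area_2}) (giving $\Area(T_i)\in[1,S]$) yields the sandwich $\sqrt{a/S}\le \rho_T\le 1$. Transporting (\ref{r_0R_0}) through this similarity then gives, for the distinguished point $\lambda_T\in T$,
\[
B\bigl(\lambda_T,\, \sqrt{a/S}\,r_0\bigr) \,\subseteq\, T \,\subseteq\, B(\lambda_T,\, R_0)\,.
\]

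From this pair of inclusions, both Delone properties fall out. For $r$-uniform discreteness with $r:=\sqrt{a/S}\,r_0$, the open set condition forces tile interiors to be disjoint, so the inner balls above around distinct $\lambda_T$'s are pairwise disjoint; hence any two points of $\Lambda_j(\varepsilon_n)$ are separated by at least $2r$ in $\RR^2$, and any open $r$-ball (in particular in $K_n$) meets $\Lambda_j(\varepsilon_n)$ in at most one point. For $R_0$-relative density in $K_n$, it suffices to show that $K_n$ is contained in the union of the tiles of $\cP_{\varepsilon_n}(T_j)$, which is the inflated copy of $T_j$. Since (\ref{r_0R_0}) and closedness of $T_j$ give $\overline{B(\lambda_j^{(0)},r_0)}\subseteq T_j$, scaling by $1/\sqrt{\varepsilon_n}$ from $\lambda_j^{(0)}$ carries this inclusion to $K_n \subseteq \lambda_j^{(0)} + \tfrac{1}{\sqrt{\varepsilon_n}}(T_j-\lambda_j^{(0)})$. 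Any $x\in K_n$ therefore sits in some tile $T$, and the outer inclusion gives $\lambda_T \in \overline{B(x,R_0)} \cap \Lambda_j(\varepsilon_n)$.

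The main thing to track carefully is the bookkeeping of scale factors: one needs $\Area(T)\le 1$ for the upper bound $\rho_T \le 1$ (yielding the outer ball), $\Area(T_i)\le S$ for the lower bound $\rho_T\ge \sqrt{a/S}$ (yielding the inner ball), and the closed-ball version of (\ref{r_0R_0}) to transfer the containment through the inflation. No conceptually new idea is required beyond these observations.
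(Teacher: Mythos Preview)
Your proof is correct and rests on the same core geometric observation as the paper: the ball sandwich $B(\lambda_T,\sqrt{a/S}\,r_0)\subseteq T\subseteq B(\lambda_T,R_0)$ obtained by transporting \eqref{r_0R_0} through the similarity of ratio $\rho_T=\sqrt{\Area(T)/\Area(T_i)}\in[\sqrt{a/S},1]$. Where you differ is in the choice of $K_n$ and in the packaging of uniform discreteness. The paper takes $K_n$ to be the entire inflated patch $\bigcup_{T\in\cP_{\varepsilon_n}(T_j)}T$ (after normalising $\lambda_j^{(0)}=0$), whereas you take the smaller closed disk $\overline{B(\lambda_j^{(0)},r_0/\sqrt{\varepsilon_n})}$ sitting inside it; your choice makes (i) and (ii) immediate from monotonicity of the radii, while the paper's choice requires (implicitly) that the inflated copies of $T_j$ be nested, which uses that $T_j$ is star-shaped about the origin. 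For uniform discreteness the paper runs a two-case triangle-inequality argument at an arbitrary point $x\in K_n$, while you simply note that the disjoint inner balls force $\|\lambda_T-\lambda_{T'}\|\ge 2r$ globally, which is cleaner and yields the same conclusion. Both routes are equivalent in strength; yours is marginally more economical.
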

\begin{proof}
Recall from \eqref{r_0R_0} that we have
\[
B(\lambda_j^{(0)},r_0) \subset T_j \subset B(\lambda_j^{(0)},R_0)\,, \quad \forall\, T_j\in\cF\,.
\]

Given $0<\varepsilon_n<1$, consider an arbitrary tile $T\in \cP_{\varepsilon_n}(T_j)$. There must exist a unique finite sequence $(e_1,e_2,\ldots,e_{m(\varepsilon_n)})\in\Sigma_{m(\varepsilon_n)}$ and one tile $T_{i_0}\in\cF$ such that
\[
T=\tfrac{1}{\sqrt{\varepsilon_n}}g_{e_1}\circ g_{e_2} \circ \dots \circ g_{e_{m(\varepsilon_n)}}(T_{i_0})\,.
\]
Define $x_T$ to be the point of $\Lambda_j(\varepsilon_n)$ that lies in $T$, i.e.
$$
x_T= \tfrac{1}{\sqrt{\varepsilon_n}}g_{e_1}\circ g_{e_2} \circ \dots \circ g_{e_{m(\varepsilon_n)}}(\lambda_{i_0}^{(0)})\,.
$$

Let $c=\Area(T)$ and $A=\Area(T_{i_0})$. Hence, the linear scaling factor from $T_{i_0}$ to $T$ is $\sqrt{\frac{c}{A}}$. Thus, scaling $B(\lambda_{i_0}^{(0)},r_0)$ by the linear factor $\sqrt{\frac{c}{A}}$ will result in a ball of radius $\sqrt{\frac{c}{A}}r_0$ that fits inside $T$ when centered at $x_T$. Hence, we have $B(x_T,\sqrt{\frac{c}{A}}r_0)\subseteq T$. Similarly, we have $T\subseteq B(x_T,\sqrt{\frac{c}{A}}R_0)$.   Moreover, by Remark~\ref{range_of_area}, we have $a \leq c\leq 1$. By assumption~\ref{range_of_area_2}, we get $A\in[1,S]$. From this, we obtain
\begin{equation}\label{eq:inclusions}
	B(x_T,\textstyle{\sqrt{\tfrac{a}{S}}}r_0) \subseteq B(x_T,\textstyle{\sqrt{\tfrac{c}{A}}}r_0) \subseteq T \subseteq B(x_T,\textstyle{\sqrt{\tfrac{c}{A}}}R_0) \subseteq B(x_T,R_0) \quad \forall \, T \in \cP_{\varepsilon_n}(T_j)\,.
\end{equation}

Assume without loss of generality that $\lambda_j^{(0)}=0$.  Then, for each $\varepsilon_n$, we have that
\begin{equation*}
	\Lambda_j(\varepsilon_n)\subseteq K_n = \bigcup_{T\in\cP_{\varepsilon_n}(T_j)}T \,.
\end{equation*}
For this choice of $K_n$, it is easy to see that conditions (i) and (ii) are satisfied.  To prove condition (iii), we must show that for every $x\in K_n$,  $\card(B(x,r)\cap\Lambda_j(\varepsilon_n)\leq 1$ and $\overline{B(x,R)}\cap \Lambda_j(\varepsilon_n)\neq \emptyset$, when $r=\sqrt{\tfrac{a}{S}}r_0$ and $R=R_0$.

To this end, let $x\in K_n$ be arbitrary. By our choice of $K_n$, there exists $T\in\cP_{\varepsilon_n}(T_j)$ such that $x\in T$.  First, notice that \eqref{eq:inclusions} gives us $x\in B(x_T,R)$, which implies that $x_T \in B(x,R)$, so $\overline{B(x,R)}\cap \Lambda_j(\varepsilon_n) \neq \emptyset$ holds.   It remains to prove that $\card(B(x,r)\cap  \Lambda_j(\varepsilon_n))\leq 1$. We consider two cases:

\textit{Case 1.}  Assume that $x$ lies in $B(x_T,r)$.  Then $x_T\in B(x,r)$. We claim that $x_{T'}\notin B(x,r)$ for any $T' \in \cP_{\varepsilon_n}(T_j) \backslash T$.  Indeed, we have
\begin{equation}\label{triangle_inequality}
	2r \leq \|x_T-x_{T'}\| \leq \|x_T-x\| + \|x-x_{T'}\| \leq r + \|x-x_{T'}\|\,.
\end{equation}
the first inequality holds since $T'$ and $T$ are non-overlapping, and by \eqref{triangle_inequality} we have $\|x-x_{T'}\| \geq r$.

\textit{Case 2.}  Assume that $x$ lies outside of $B(x_T,r)$, i.e.\ $x_T \notin B(x,r)$. We need to check that there can be at most one tile $T'\in\cP_{\varepsilon_n}(T_j) \backslash T$ such that $x_{T'}\in B(x,r)$.  On the contrary, suppose that there exist two distinct tiles $T',T''\in\cP_{\varepsilon_n}(T_j) \backslash T$ with $x_{T'},x_{T''}\in B(x,r)$.  Since $T'$ and $T''$ have no overlap, we must have $\|x_{T'}-x_{T''}\|\geq 2r$.  On the other hand, since $x_{T'},x_{T''}\in B(x,r)$, the triangle inequality gives us
\[
\|x_{T'}-x_{T''}\| \leq \|x_{T'}-x\| + \|x-x_{T''}\| < r+r=2r\,,
\]
a contradiction.

Thus, we have shown that $\card(B(x,r)\cap\Lambda_j(\varepsilon_n))\leq 1$ and $\overline{B(x,R)}\cap \Lambda_j(\varepsilon_n) \neq \emptyset$.  As $x$ was arbitrary, this completes the proof.
\end{proof}

\begin{thm}\label{thm:convergence}
\emph{
	There exists a subsequence of $\Lambda_j(\varepsilon_n)$  that converges to a $(r,R)$-Delone set in $\RR^2$ in the Chabauty--Fell topology.
}
\end{thm}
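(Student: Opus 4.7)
The plan is to extend each finite point set $\Lambda_j(\varepsilon_n)$ to a full $(r,R')$-Delone set on $\RR^2$ (with a single uniform $R'$), extract a Fell-convergent subsequence by compactness of $\cW_{r,R'}(\RR^2)$, and then transfer the convergence back to the original sequence using Proposition~\ref{prop:Fell-compact-convergence}. Since the construction sets $\lambda_j^{(0)}=0$, rescaling $B(0,r_0)\subseteq T_j$ by $1/\sqrt{\varepsilon_n}$ yields $B(0,r_0/\sqrt{\varepsilon_n})\subseteq K_n$, so the balls $B(0,r_0/\sqrt{\varepsilon_n})$ exhaust $\RR^2$ as $n\to\infty$. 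The first concrete step is to fix, for each $n$, a maximal set $\tilde\Lambda_n\supseteq\Lambda_j(\varepsilon_n)$ with pairwise distances at least $2r$ and with every point of $\tilde\Lambda_n\setminus\Lambda_j(\varepsilon_n)$ lying in $\RR^2\setminus K_n$ (existence by Zorn's lemma). Then $\tilde\Lambda_n$ is $r$-uniformly discrete, and maximality combined with the $R$-relative density of $\Lambda_j(\varepsilon_n)$ in $K_n$ supplied by Lemma~\ref{Limit_Delone} shows that $\tilde\Lambda_n$ is $R'$-relatively dense in $\RR^2$ for $R':=\max\{R,2r\}$, so $\tilde\Lambda_n\in\cW_{r,R'}(\RR^2)$ for every $n$.

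By the compactness of $\cW_{r,R'}(\RR^2)$ in the Chabauty--Fell topology, I pass to a subsequence with $\tilde\Lambda_{n_k}\to\tilde\Lambda$ for some $\tilde\Lambda\in\cW_{r,R'}(\RR^2)$. The extension was arranged precisely so that $\tilde\Lambda_{n_k}\cap K_{n_k}=\Lambda_j(\varepsilon_{n_k})$; in particular, for $R_k:=r_0/\sqrt{\varepsilon_{n_k}}\to\infty$ we have $\tilde\Lambda_{n_k}\cap B(0,R_k)=\Lambda_j(\varepsilon_{n_k})\cap B(0,R_k)$. Applying Proposition~\ref{prop:Fell-compact-convergence} to $\tilde\Lambda_{n_k}$ yields $\tilde\Lambda_{n_k}\cap B(0,R_k)\to\tilde\Lambda$, and applying it in the reverse direction to the sequence $\Lambda_j(\varepsilon_{n_k})$ gives $\Lambda_j(\varepsilon_{n_k})\to\tilde\Lambda$ in the Chabauty--Fell topology.

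It then remains to upgrade $\tilde\Lambda$ from $(r,R')$- to $(r,R)$-Delone. The $r$-uniform discreteness passes to the Fell limit automatically. For $R$-relative density I fix $x\in\RR^2$, observe that $\overline{B(x,R)}\subseteq K_{n_k}$ for all sufficiently large $k$, and invoke Lemma~\ref{Limit_Delone} to obtain a point $y_k\in\Lambda_j(\varepsilon_{n_k})\cap\overline{B(x,R)}$. Extracting a further subsequence with $y_k\to y\in\overline{B(x,R)}$, the definition of the Chabauty--Fell metric together with closedness of $\tilde\Lambda$ forces $y\in\tilde\Lambda$, yielding $\overline{B(x,R)}\cap\tilde\Lambda\neq\emptyset$ as required. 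The step I expect to require the most care is the extension: the added points must be constrained to lie outside $K_n$ so that the restriction of $\tilde\Lambda_n$ to $K_n$ is exactly $\Lambda_j(\varepsilon_n)$, and simultaneously a single $R'$ must govern the relative density for every $n$, so that all $\tilde\Lambda_n$ sit in one common compact space and a single diagonal-free compactness argument suffices.
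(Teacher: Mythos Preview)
Your argument is correct and follows essentially the same route as the paper: extend each $\Lambda_j(\varepsilon_n)$ to a Delone set on all of $\RR^2$, extract a convergent subsequence by compactness of $\cW_{r,R'}(\RR^2)$, and transfer convergence back via Proposition~\ref{prop:Fell-compact-convergence}. The only difference is one of detail: the paper simply asserts the existence of an $(r,R)$-extension $\Lambda_n$ with $\Lambda_n\cap K_n=\Lambda_j(\varepsilon_n)$, whereas you construct the extension explicitly by Zorn's lemma (at the cost of a possibly larger $R'=\max\{R,2r\}$) and then add a short upgrading step at the end; you also pick $R_k$ so that $B(0,R_k)\subseteq K_{n_k}$ rather than $K_{n_k}\subseteq B(0,R_{n_k})$, which makes the identification $\tilde\Lambda_{n_k}\cap B(0,R_k)=\Lambda_j(\varepsilon_{n_k})\cap B(0,R_k)$ immediate.
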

\begin{proof}
By Lemma~\ref{Limit_Delone}, we have $\Lambda_j(\varepsilon_n) \subseteq \mathcal{W}_{r,R}(K_n)$, so there must exist a point set $\Lambda_n \in \mathcal{W}_{r,R}(\RR^2)$ such that $\Lambda_n \cap K_n = \Lambda_j(\varepsilon_n)$.  Next, since $\cW_{r,R}(\RR^2)$ is compact, there exists a subsequence $\{\Lambda_{n_i}\}_{i \geq 1}$ of $\{\Lambda_n\}_{n \geq 1}$ converging to some $\Lambda \in \cW_{r,R}(\RR^2)$ in the Chabauty--Fell topology.  Moreover, we have $K_n \subseteq K_{n+1}$ for all $n$ and $\bigcup_{n\geq 1}K_n = \RR^2$. By compactness of $K_{n_i}$ in $\RR^2$ (in the Euclidean topology), we can pick a sequence $R_{n_i}$ of positive real numbers such that $K_{n_i} \subseteq B(0,R_{n_i})$ for all $i$.
Therefore, by Proposition~\ref{prop:Fell-compact-convergence}, we also have that $\Lambda_{n_i} \cap B(0,R_{n_i})$ converges to $\Lambda$ in the Chabauty--Fell topology. This proves the theorem.
\end{proof}

\section{Tilings By Badly Approximable Triangles}\label{sec:specific}

In this section, we consider the subdivision rule given in Figure~\ref{fig:subdivision-rule-new} for arbitrary angles $\alpha,\beta,\gamma$. Below, we describe the GIFS associated with this subdivision rule. Then, as illustrated in Section~\ref{sec:optimal-tilings}, we can use this GIFS and the methods in Section~\ref{sec:threshold} to produce a sequence of finite patches leading to a Delone set.

\subsection{GIFS for arbitrary angles}
\label{GIFS2}

Let $T_1$ and $T_2$ be the scalene and isosceles triangle of unit area, respectively. We compute the GIFS for $\cF=\{T_1,T_2\}$.  We use $e^{i\theta}$ to denote rotation about the origin by the angle $\theta$. Suppose that the bottom left corners of the tiles are at the origin. The contractive similitudes are as follows:
\begin{equation}\label{eq:GIFS-specific}
\begin{aligned}
	& f_1(x,y)=\tfrac{1}{1+t^2}(x,y)\,, \\
	& f_2(x,y)=\tfrac{t}{s(1+t^2)}(-x,y) \cdot e^{i(\pi-\beta)}+(a,0)+at(\cos\gamma,\sin\gamma)\,,\\
	& f_3(x,y)=\tfrac{t}{1+t^2}(x,y)\cdot e^{i\gamma}+(a,0)\,,\\ &f_4(x,y)=\tfrac{\sqrt{C}t}{1+t^2}(-x,y) \cdot e^{i\alpha}+(bu,0)\,, \\
	& f_5(z)=\tfrac{\sqrt{C}t}{1+t^2}(-x,y) \cdot e^{i(\pi-\alpha)}+(bu,0)+bt(\cos\alpha,\sin\alpha) \,, \\
	& g_1(x,y)=\tfrac{u}{\sqrt{C}(st+u)}(x,y) \cdot e^{i(\pi-\beta)}+(a,0)+at(\cos(\gamma),\sin(\gamma))\,,\\
	& g_2(x,y)=\tfrac{u}{st+u}(x,y)\,, \\
	& g_3(x,y)=\tfrac{st}{st+u}(x,y)+bst^2(\cos\gamma,\sin\gamma)\,,
\end{aligned}
\end{equation}
where
\begin{equation*}
s = \tfrac{\sin\beta}{\sin\gamma}
\,,\quad t = \tfrac{\sin\alpha}{\sin\beta}
\,,\quad u = 2st^2\cos\gamma=\tfrac{2\sin^2\gamma}{\sin\beta\tan\gamma}
\,, \quad C=\tfrac{2(1+t^2)^2\sin\alpha\cot \gamma}{t(st+u)(1+2t\cos \gamma)}\,,
\end{equation*}
and
\begin{equation*}
a = \tfrac{1}{1+t^2}\sqrt{\tfrac{2}{s\sin(\alpha)}}\,,
\quad b = 2\sqrt{\tfrac{\cot(\gamma)}{st(st+u)(2t\cos(\gamma) + 1)}} \,.
\end{equation*}
Then we get
\begin{equation*}
T_1=g_1(T_2) \cup f_1(T_1) \cup f_2(T_1) \cup f_3(T_1) \,,
\end{equation*}
and
\begin{equation*}
T_2=g_2(T_2) \cup g_3(T_2) \cup f_4(T_1) \cup f_5(T_1)\,.
\end{equation*}

\subsection{Tilings with optimal badly approximable angles}\label{sec:optimal-tilings}

Our objective is to obtain Delone sets associated with the unique solutions to the equality $x+y+z=1\ (x,y,z\in\B_{2,1},\ x \leq y \leq z)$ found in Theorem~\ref{Main}:
\begin{equation*}
x=2-\sqrt{3}\,,\quad y=z=\frac{\sqrt{3}-1}{2}\,,
\end{equation*}
and
\begin{equation*}
x=y=\frac{2-\sqrt{2}}{2}\,,\quad  z=\sqrt{2}-1\,.
\end{equation*}
For the first solution, by choosing
\begin{equation}\label{eq:optimal1-angles}
\alpha = (2-\sqrt{3})\pi\,, \quad \beta=\gamma=\frac{(\sqrt{3}-1)\pi}{2} \,,
\end{equation}
the subdivision rule in Figure~\ref{fig:subdivision-rule-new} reduces to a subdivision rule involving only the isosceles triangle with angles $\alpha$ and $\beta=\gamma$  as shown in Figure~\ref{fig:subdivision-rule-iso}.
\begin{figure}[ht]
\centering
\includegraphics{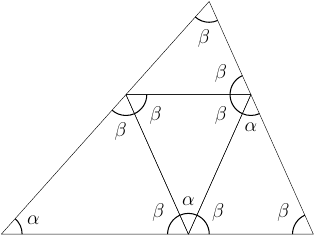}
\caption{Subdivision rule for the isosceles triangle with optimal badly approximable angles $\alpha$ and $\beta=\gamma$ as in \eqref{eq:optimal1-angles}.  }
\label{fig:subdivision-rule-iso}
\end{figure}
Similarly, for the second solution, by choosing
\begin{equation}\label{eq:optimal2-angles}
\alpha = (\sqrt{2}-1)\pi\,, \quad \beta=\gamma=\frac{(2-\sqrt{2})\pi}{2} \,,
\end{equation}
we again get a subdivision rule involving only an isosceles triangle, exactly as in Figure~\ref{fig:subdivision-rule-iso} except with different values for $\alpha$ and $\beta$.

In this section, we provide some illustrations of our construction in Section~\ref{GIFS2} in the specific case when $\alpha$ and $\beta=\gamma$ are as in $\eqref{eq:optimal1-angles}$ and \eqref{eq:optimal2-angles}. First, in Figure~\ref{fig:optimal_iterations}, we illustrate the $\varepsilon$-rule process for $\varepsilon=0.2$ as an example.  Then, in Figure~\ref{fig:optimal1_patches} and Figure~\ref{fig:optimal2_patches}, we show the final patches for a few different values of $\varepsilon$.

\begin{figure}[ht]
\begin{subfigure}[c]{\textwidth}
	\centering
	\includegraphics{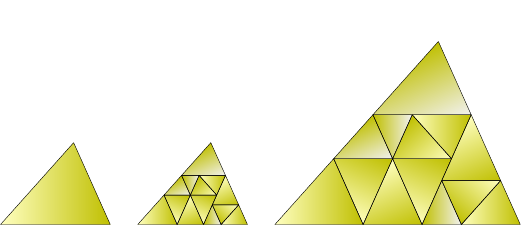}
	\caption{$\alpha=(2-\sqrt{3})\pi$, $\beta=\gamma=\frac{(\sqrt{3}-1)\pi}{2}$}
\end{subfigure}
\par\bigskip
\begin{subfigure}[c]{\textwidth}
	\centering
	\includegraphics{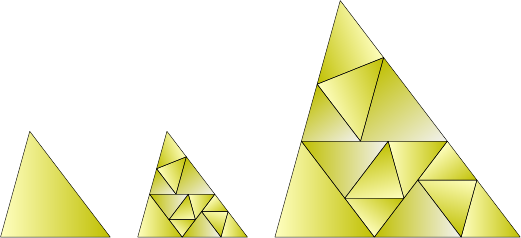}
	\caption{$\alpha=(\sqrt{2}-1)\pi$, $\beta=\gamma=\frac{(2-\sqrt{2})\pi}{2}$}
\end{subfigure}
\caption{Illustration of the $\varepsilon$-rule for $\varepsilon=0.2$ for optimal badly approximable angles.}
\label{fig:optimal_iterations}
\end{figure}
\begin{figure}[ht]
\begin{subfigure}[b]{0.3\textwidth}
	\centering
	\includegraphics{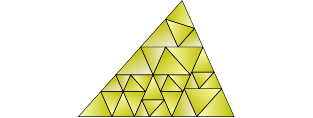}
	\caption{$\varepsilon=0.08$}
\end{subfigure}
\hskip -9ex
\begin{subfigure}[b]{0.3\textwidth}
	\centering
	\includegraphics{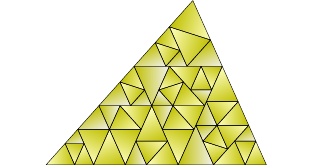}
	\caption{$\varepsilon=0.04$}
\end{subfigure}
\begin{subfigure}[b]{0.3\textwidth}
	\centering
	\includegraphics{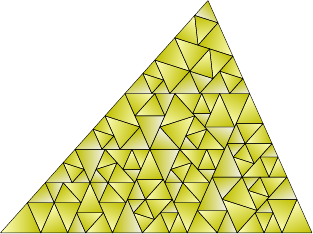}
	\caption{$\varepsilon=0.02$}
\end{subfigure}
\caption{Final patches resulting from different $\varepsilon$-rules for optimal badly approximable angles $\alpha=(2-\sqrt{3})\pi$, $\beta=\gamma=\frac{(\sqrt{3}-1)\pi}{2}$.}
\label{fig:optimal1_patches}
\end{figure}
\begin{figure}[ht]
\begin{subfigure}[b]{0.3\textwidth}
	\centering
	\includegraphics{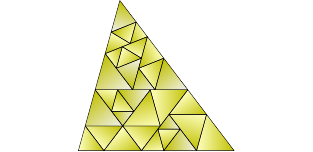}
	\caption{$\varepsilon=0.08$}
\end{subfigure}
\hskip -9ex
\begin{subfigure}[b]{0.3\textwidth}
	\centering
	\includegraphics{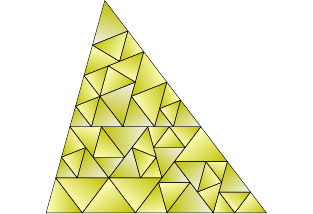}
	\caption{$\varepsilon=0.04$}
\end{subfigure}
\begin{subfigure}[b]{0.3\textwidth}
	\centering
	\includegraphics{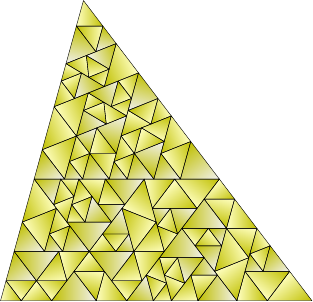}
	\caption{$\varepsilon=0.02$}
\end{subfigure}
\caption{Final patches resulting from different $\varepsilon$-rules for optimal badly approximable angles $\alpha=(\sqrt{2}-1)\pi$, $\beta=\gamma=\frac{(2-\sqrt{2})\pi}{2}$.}
\label{fig:optimal2_patches}
\end{figure}

As described in \cite[Remark~4.11]{Smi-Solo:21}, one can produce a stationary tiling by applying an additional isometry after each $\varepsilon$-rule.  For our subdivision rule shown in Figure~\ref{fig:subdivision-rule-new}, we can do this by introducing a rotation by the badly approximable angle $-\gamma$.

To make this more precise, choose the sequence $\varepsilon_n = \varepsilon_0^n$ where $\varepsilon_0 = \frac{t^2}{(1+t^2)^2}$ and $t = \frac{\sin(\alpha)}{\sin(\beta)}$.  Then $\varepsilon_0$ is the square of the contraction factor of the similitude $f_3$ in \eqref{eq:GIFS-specific}.  This function maps the large scalene triangle $T_1$ shown on the left in Figure~\ref{fig:subdivision-rule-new} to the smaller scalene triangle in its center.  For this value of $\varepsilon_0$, the finite patch $\cP_{\varepsilon_0}(T_1)$ contains a copy of $e^{i\gamma}T_1$.  With this choice of $\varepsilon_n$,  the sequence $e^{-in\gamma}(\cP_{\varepsilon_n}(T_1))$ will produce a stationary tiling.  Figure~\ref{fig:optimal_stationary} shows the resulting nested sequence of finite patches when $\alpha$ and $\beta=\gamma$ are as in $\eqref{eq:optimal1-angles}$ and \eqref{eq:optimal2-angles}.

\begin{figure}[ht]
\begin{subfigure}[c]{\textwidth}
	\centering
	\includegraphics{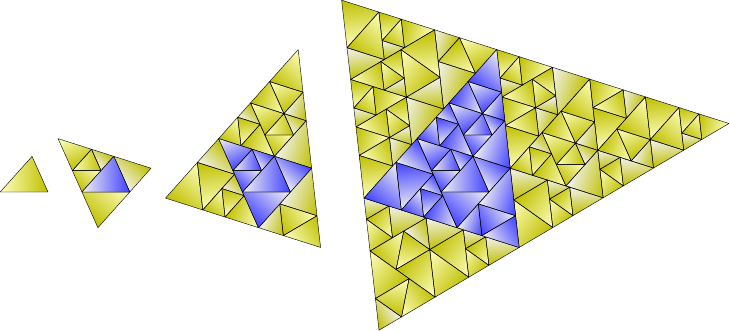}
	\caption{$\alpha=(2-\sqrt{3})\pi$, $\beta=\gamma=\frac{(\sqrt{3}-1)\pi}{2}$}
\end{subfigure}
\par\bigskip
\begin{subfigure}[c]{\textwidth}
	\centering
	\includegraphics{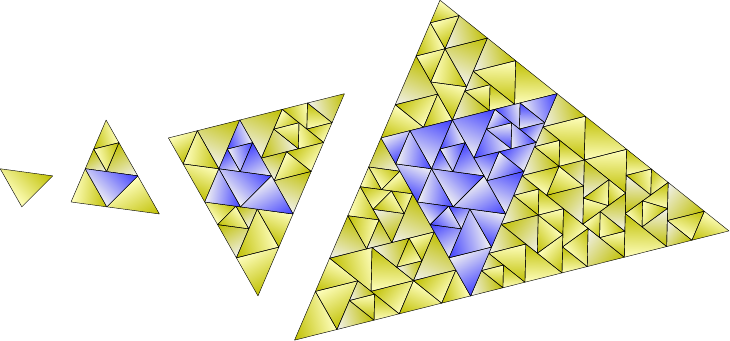}
	\caption{$\alpha=(\sqrt{2}-1)\pi$, $\beta=\gamma=\frac{(2-\sqrt{2})\pi}{2}$}
\end{subfigure}
\caption{Illustration of the sequences producing stationary tilings for badly approximable angles. The stationary tiling is produced by choosing $\varepsilon_n = \varepsilon_0^n$ where $\varepsilon_0$ such that the area of the middle triangle is preserved, and applying an appropriate isometry after applying each $\varepsilon_n$-rule. Each stationary subpatch is highlighted in blue. In (B), an additional rotation by $\frac{\pi}{4}$ is applied for ease of illustration. }
\label{fig:optimal_stationary}
\end{figure}

Each finite patch in these nested sequences contains the previous finite patch, along with several new copies of the triangle rotated by badly approximable angles $\alpha$ and $\beta=\gamma$.  Due to these irrational rotations, we expect the diffraction from the resulting stationary tilings to be rotationally invariant. Moreover, the discrepancy of each irrational rotation is small and optimal for our two choices of angles.

We conclude this section with a remark on convergence.

\begin{remark}
The tilings produced by our method do not have finite local complexity; because of this, we utilized the Chabauty--Fell topology to show the existence of a limit Delone set via a non-constructive process of subsequence selection.  However, by the method described above, we can always produce a nested sequence of finite patches converging to a stationary tiling.  In this case, we also have convergence in the \emph{local topology}, i.e.\ the finite patches overlap exactly out to larger and larger radii up to small translations (see \cite[Chapter~5]{Baake-Grimm:13} for a formal definition). This stronger notion of convergence is primarily used in the study of tilings with finite local complexity, but is still applicable stationary constructions.
\end{remark}

\section{Proof of Theorems~\ref{Main}, \ref{Main2} and \ref{Main3}}\label{sec:proof_main123}

To prove Theorem~\ref{Main} and \ref{Main2}, we will need the following lemmas:
\begin{lem}[Forbidden patterns]\label{lem:x3-y3-z-1}
\emph{
	Let $z\in(0,1)\backslash\QQ$ be such that
	$z \in [r_1, r_2]$
	for some $r_1,r_2\in(0,1)\cap \QQ$.  If the simple continued fractions of $r_1$ and $r_2$ are of the form
	\begin{equation}\label{eq:forbidden1}
		r_1 = [(2)^{2k-1},(2,1)^\ell,s,a_1,\dots,a_{n_1},\infty]\,,\quad r_2 = [(2)^{2k-1},b_1,\dots,b_{n_2},\infty]\,,
	\end{equation}
	or
	\begin{equation}\label{eq:forbidden2}
		r_1 = [(2)^{2k},a_1,\dots,a_{n_1},\infty]\,,\quad r_2 = [(2)^{2k},(2,1)^\ell,s,b_1,\dots,b_{n_2},\infty]\,,
	\end{equation}
	where $k\geq 1$, $\ell \geq 0$, and $s\geq 3$, then $z \notin \B_2$.
}
\end{lem}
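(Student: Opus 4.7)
The plan is to argue by contradiction: assume $z\in\B_2$, so $a_j(z)\in\{1,2\}$ for every $j$, and derive a contradiction with $z\in[r_1,r_2]$ using only the standard monotonicity of the continued fraction expansion --- namely, $[a_1,a_2,\dots]$ is strictly decreasing in $a_j$ when $j$ is odd and strictly increasing when $j$ is even (a consequence of $P_jQ_{j-1}-P_{j-1}Q_j=\pm 1$).

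First I would localize: set $n=2k-1$ in case \eqref{eq:forbidden1} and $n=2k$ in case \eqref{eq:forbidden2}. Since $r_1,r_2\in I((2)^n)$ and cylinder sets are intervals, irrationality of $z$ forces $z\in I((2)^n)$, i.e.\ $a_1(z)=\dots=a_n(z)=2$. Then, by alternately maximizing/minimizing each subsequent partial quotient in $\{1,2\}$ according to the parity of its position, I would compute the extremal $\B_2$-values inside the cylinder:
\[
M:=\max\bigl(\B_2\cap I((2)^{2k-1})\bigr)=[(2)^{2k-1},\overline{2,1}]\,, \quad m:=\min\bigl(\B_2\cap I((2)^{2k})\bigr)=[(2)^{2k},\overline{2,1}]\,.
\]
Both are attained as $\B_2$ is closed.

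Next I would finish each case by a term-wise comparison. In case \eqref{eq:forbidden1}, the expansions of $M$ and $r_1=[(2)^{2k-1},(2,1)^\ell,s,\dots]$ agree on positions $1,\dots,2k+2\ell-1$ and first differ at the even position $N:=2k+2\ell$, where $M$ has $2$ and $r_1$ has $s\geq 3$; monotonicity (increasing at an even position) gives $r_1>M\geq z$, contradicting $z\geq r_1$. Case \eqref{eq:forbidden2} is symmetric: the expansions of $m$ and $r_2=[(2)^{2k},(2,1)^\ell,s,\dots]$ agree on positions $1,\dots,2k+2\ell$ and first differ at the odd position $N':=2k+2\ell+1$, where monotonicity (decreasing at an odd position) yields $r_2<m\leq z$, contradicting $z\leq r_2$.

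The main thing to be careful about is parity: the first disagreement falls at an even position in case \eqref{eq:forbidden1} but at an odd position in case \eqref{eq:forbidden2}, so the monotonicity direction flips and the contradiction lands on the opposite side of $[r_1,r_2]$ in the two cases. Once the parity is pinned down, the boundary case $\ell=0$ (empty prefix $(2,1)^0$) is handled uniformly by the same formulas.
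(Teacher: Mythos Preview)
Your proof is correct and follows essentially the same approach as the paper's: both localize $z$ to the cylinder $I((2)^n)$ and then use the alternating monotonicity of continued fractions to derive a contradiction from the $(2,1)^\ell,s$ prefix. The only difference is organizational---the paper pins down $a_{2k}(z),\dots,a_{2k+2\ell}(z)$ one at a time via successive tails before reaching $a_{2k+2\ell}(z)\geq s\geq 3$, whereas you collapse this into a single comparison against the extremal element $[(2)^n,\overline{2,1}]$ of $\B_2\cap I((2)^n)$; this is a mild streamlining, not a different idea.
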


\begin{proof}

\medskip
\noindent \textit{Case 1.} Assume that $r_1$ and $r_2$ are of the form \eqref{eq:forbidden1}
where $k\geq 1$, $\ell \geq 0$, and $s\geq 3$.
\medskip

Let $z = [c_1,c_2,\dots]$. Suppose by contradiction that $z\in\B_2$. First, for simplicity, observe that \eqref{eq:forbidden1} implies
\begin{equation}\label{eq:forbidden1-simple}
	[(2)^{2k-1},(2,1)^\ell,s,\infty] \leq z \leq [(2)^{2k-1},\infty]\,.
\end{equation}
Since \eqref{eq:forbidden1} requires $c_1,\dots,c_{2k-1} \geq 2$ and $z\in\B_2$ requires $c_1,\dots,c_{2k-1} \leq 2$, we must have
\begin{equation}\label{eq:equals2}
	c_1=\dots=c_{2k-1}=2\,,
\end{equation}
i.e.\ $z\in I((2)^{2k-1})$.

Next, consider $u=[c_{2k},c_{2k+1},\dots]$. By \eqref{eq:forbidden1-simple} and \eqref{eq:equals2}, we have
\begin{equation}\label{eq:leq21}
	u \leq [(2,1)^\ell,s,\infty]\,.
\end{equation}
From this, we can deduce that $u\in I((2,1)^\ell)$. To see this, observe that \eqref{eq:leq21} gives us $c_{2k} \geq 2$, so we must have $c_{2k}=2$ because $u\in \B_2$.  We also get that $c_{2k+1} \leq 1$, so we must have $c_{2k+1} = 1$.  Proceeding inductively, we find that $c_{2q}=2$  and  $c_{2q+1}=1$ for $k\leq q\leq k+\ell-1$.

Now consider $v = [c_{2(k+\ell)},c_{2(k+\ell)+1},\dots]$.  From \eqref{eq:leq21} we get
\[
v \leq [s,\infty] = \frac{1}{s}\,,
\]
so $c_{2(k+\ell)} = \lfloor 1/v \rfloor \geq s \geq 3$, contradicting $v\in\B_2$. Therefore, $z$ cannot have the form \eqref{eq:forbidden1}.

\medskip
\noindent \textit{Case 2.} The proof when $r_1$ and $r_2$ are of the form \eqref{eq:forbidden2} is similar to Case 1.
\end{proof}

We call any interval of the form \eqref{eq:forbidden1} or \eqref{eq:forbidden2} a \textit{forbidden pattern}.  Moreover, if $z \in [r_1, r_2]$ for some forbidden pattern $[r_1, r_2]$, we say that $z$ is \emph{contained in a forbidden pattern.}

\begin{lem}\label{xyz}
\emph{
	The equality
	$
	x+y=z\ (x,y,z\in \B_{2})
	$
	has exactly one solution
	\begin{equation}\label{eq:B2-unique}
		x=y=\frac{\sqrt{3}-1}2=[\overline{2,1}],\ z=\sqrt{3}-1=[\overline{1,2}]\,.
	\end{equation}
}
\end{lem}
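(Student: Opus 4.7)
The proof will hinge on the simple observation that $\B_2$ is trapped in a very narrow interval, so narrow that the equation $x+y=z$ can hold only at the boundary.

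The plan is to first establish the sharp enclosure
\[
\B_2 \subset \left[\tfrac{\sqrt{3}-1}{2},\ \sqrt{3}-1\right],
\]
where the left endpoint equals $[\overline{2,1}]$ and the right endpoint equals $[\overline{1,2}]$. For the lower bound, note that if $w = [a_1,a_2,\dots] \in \B_2$ then $w = 1/(a_1+T(w))$ with $a_1\in\{1,2\}$ and $T(w)\in\B_2$; to minimize $w$ one takes $a_1=2$ and maximizes $T(w)$, and to maximize one takes $a_1=1$ and minimizes $T(w)$. Iterating this alternation (which is just the standard argument that $[2,1,2,1,\dots]$ and $[1,2,1,2,\dots]$ are the extremal $\{1,2\}$-expansions), one obtains the two periodic bounds. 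A short direct computation with the quadratic fixed-point equations $2u^2+2u-1=0$ and $v^2+2v-2=0$ identifies these extrema as $(\sqrt{3}-1)/2$ and $\sqrt{3}-1$ respectively.

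Next I would apply these bounds to the additive equation. If $x,y,z\in\B_2$ with $x+y=z$, then
\[
\sqrt{3}-1 \;=\; 2\cdot \tfrac{\sqrt{3}-1}{2} \;\leq\; x+y \;=\; z \;\leq\; \sqrt{3}-1,
\]
so every inequality is forced to be an equality. This immediately pins down $z = \sqrt{3}-1 = [\overline{1,2}]$ and $x = y = (\sqrt{3}-1)/2 = [\overline{2,1}]$, giving uniqueness. Finally one just verifies directly that this triple does lie in $\B_2$ and satisfies the equation.

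There is no real obstacle here: the only thing requiring care is the justification that the extremal $\{1,2\}$-expansions really are $[\overline{2,1}]$ and $[\overline{1,2}]$, which is a standard monotonicity argument for continued fractions (odd convergents increase, even convergents decrease in each coordinate). Given the tightness of the enclosure, the conclusion is then immediate and the proof is genuinely short compared with Theorem~\ref{Main}, whose proof must handle $\B_{2,1}$ where the first partial quotient is allowed to be $3$, leaving much more room to maneuver.
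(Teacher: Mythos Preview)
Your proposal is correct and follows essentially the same approach as the paper: identify $\min\B_2=[\overline{2,1}]=\tfrac{\sqrt{3}-1}{2}$ and $\max\B_2=[\overline{1,2}]=\sqrt{3}-1$, then squeeze $\sqrt{3}-1\le x+y=z\le\sqrt{3}-1$ to force equality. The only cosmetic difference is that the paper justifies the extrema via the standard lexicographic ordering rule for continued fractions (sign-alternating comparison of partial quotients) rather than your Gauss-map iteration, and phrases the conclusion as two contradictions instead of a single chain of inequalities.
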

\begin{proof}
Suppose  $x=[a_1,a_2,\cdots]$ where $a_i\in \{1,2\}$ with $i\geq 1$ and $y=[b_1,b_2,\cdots]$ where $b_j\in \{1,2\}$ with $j\geq 1$. It is very easy to get that
$x<y$ if and only if $(-1)^na_n<(-1)^nb_n$ for the first $n$ such that $a_n\neq b_n$.
As a consequence, we have
\[
\frac{\sqrt{3}-1}{2}=[\overline{2,1}] \,  = \min(\B_2)\,,
\]
and
\[
\sqrt{3}-1=[\overline{1,2}] \, = \max(\B_2)\,.
\]
With this preparation, we show that \eqref{eq:B2-unique} is the unique solution of
$
x+y=z\ (x,y,z\in \B_{2})
$ by considering the sum of $x$ and $y$.

If $x+y<\sqrt{3}-1$, we get that one of them is less than $\frac{\sqrt{3}-1}{2}$, which contradicts $\frac{\sqrt{3}-1}{2}$ being the minimum of $\B_2$. Thus, $x+y\geq \sqrt{3}-1$. Similarly, if $x+y>\sqrt{3}-1$, then $z=x+y>\sqrt{3}-1$, which contradicts $\sqrt{3}-1$ being the maximum of $\B_2$.

Therefore, $z=x+y=\sqrt{3}-1$ and $x=y=\frac{\sqrt{3}-1}{2}$.
\end{proof}
We can now provide a concise proof of Theorem~\ref{Main}.  Indeed, we will see that the main part of the proof can be summarized by the careful analysis of cases shown in Table~\ref{tab:my_label_1} and Table~\ref{tab:my_label_2}, which we present below.
However, the reader may also wish to refer to \eqref{eq_1_X_Y_Zb} and Remark~\ref{rem:intuition} for an intuitive explanation of why this case analysis is stable as the parameter $n$ increases.  

\subsection{Proof of Theorem~\ref{Main}}
Assume that $x \leq y \leq z$. We divide the proof into 4 cases:

\medskip
\noindent \textit{Case 1.} $x,y,z \in \B_2$.
\medskip

\noindent This case is impossible by Remark~\ref{rem:no-B2-solution}.

\medskip
\noindent \textit{Case 2.} $x\in \B_{2,1}\setminus\B_2$, and $y,z\in\B_2$.
\medskip

\noindent Setting $x=\frac{1}{3+X}$ with $X\in\B_2$, we have
$$
y+z=1-\frac{1}{3+X}=\frac{1}{1+\frac{1}{2+X}}\in \B_2\,.
$$
By Lemma~\ref{xyz}, we obtain
$$
y=z=\frac{\sqrt{3}-1}{2}\,, \quad 1-\frac{1}{3+X}=\sqrt{3}-1\,,
$$
i.e.
\begin{align}\label{x+y+z=1_1}
x=\frac{1}{3+X}=2-\sqrt{3}=[3,\overline{1,2}]\,, \quad y=z=\frac{\sqrt{3}-1}{2}=[\overline{2,1}]\,,
\end{align}
is the solution of $x+y+z=1$.

\medskip
\noindent \textit{Case 3.} $x,y \in \B_{2,1}\setminus\B_2$, and $z\in\B_2$.
\medskip

\noindent In this case, the continued fraction expansions of $x$ and $y$ both satisfy $a_1 = 3$, $a_j \leq 2$ for all $j \geq 2$. Under these assumptions, we aim to show that
\begin{equation*}
x=y=\frac{2-\sqrt{2}}{2} = [3,\overline{2}]\,, \quad z = \sqrt{2}-1 = [\overline{2}]\,,
\end{equation*}
is the only possible solution. First, we introduce variables $X_n,Y_n,Z_n$ to represent arbitrary numbers satisfying a certain property that depends on a given nonnegative integer $n$.  More specifically, we will prove the following statement:
\medskip

For each $n\geq 0$\,, if
$X_n \notin I(3,(2)^{n+1})$ or\
$Y_n \notin I(3,(2)^{n+1})$, then $Z_n=1-X_n-Y_n$ is contained in a forbidden pattern.
\medskip

\noindent We divide the proof into 2 cases according to the parity of $n$.

\medskip
\noindent \textit{Case 3.1.} $n$ is even.
\medskip

\noindent The computations when $n$ is even are summarized in Table~\ref{tab:my_label_1}.  There are 17 cases that must be considered for the cylinder sets of $X_n$ and $Y_n$. 
\begin{table}[ht]
\centering
\footnotesize
\begin{tabular}{c|c|c|c|c}
	\thead{Case} &
	\thead{Cylinder set \\ for $X_n$} & \thead{Cylinder set \\ for $Y_n$} & \thead{Left endpoint of \\ the forbidden pattern} & \thead{Right endpoint of \\ the forbidden pattern}\\
	\hline
	1 & $I(3,(2)^n,1)$ & $I(3,(2)^n,1)$ &  $[(2)^{n+1},3,\infty]$ & $[(2)^{n+1},\infty]$   \\
	2.1 & $I(3,(2)^n,1,2)$ & $I(3,(2)^n,2,2)$  &  $[(2)^{n+1},3,\infty]$ & $[(2)^{n+1},3,1,\infty]$   \\
	2.2&$I(3,(2)^n,1,2)$ & $I(3,(2)^n,2,1)$   & $[(2)^{n+1},(2,1),12,\infty]$ & $[(2)^{n+1},3,1,\infty]$  \\
	2.3.1&$I(3,(2)^n,1,1,1)$ & $I(3,(2)^n,2,2,1)$ & $[(2)^{n+1},3,\infty]$ & $[(2)^{n+1},3,2,\infty]$  \\
	2.3.2&$I(3,(2)^n,1,1,1)$ & $I(3,(2)^n,2,2,2)$  & $[(2)^{n+1},3,\infty]$ & $[(2)^{n+1},3,3,\infty]$  \\
	2.3.3&$I(3,(2)^n,1,1,2)$ & $I(3,(2)^n,2,2,1)$  & $[(2)^{n+1},(2,1),14,\infty]$  & $[(2)^{n+1},3,10,\infty]$ \\
	2.3.4&$I(3,(2)^n,1,1,2)$ & $I(3,(2)^n,2,2,2)$  & $[(2)^{n+1},(2,1),10,\infty]$ & $[(2)^{n+1},3,20,\infty]$  \\
	2.4.1&$I(3,(2)^n,1,1,1)$ & $I(3,(2)^n,2,1,1)$  & $[(2)^{n+1},(2,1),6,\infty]$  & $[(2)^{n+1},3,4,\infty]$ \\
	2.4.2&$I(3,(2)^n,1,1,1)$ & $I(3,(2)^n,2,1,2)$  & $[(2)^{n+1},(2,1),4,\infty]$ & $[(2)^{n+1},3,8,\infty]$  \\
	2.4.3&$I(3,(2)^n,1,1,2)$ & $I(3,(2)^n,2,1,1)$  & $[(2)^{n+1},(2,1),3,\infty]$ & $[(2)^{n+1},2,1,\infty]$  \\
	2.4.4.1.1&$I(3,(2)^n,1,1,2,1,1)$ & $I(3,(2)^n,2,1,2,1,1)$  & $[(2)^{n+1},(2,1),3,\infty]$ & $[(2)^{n+1},2,1,\infty]$  \\
	2.4.4.1.2&$I(3,(2)^n,1,1,2,1,1)$ & $I(3,(2)^n,2,1,2,1,2)$  & $[(2)^{n+1},(2,1),3,\infty]$  & $[(2)^{n+1},2,1,\infty]$ \\
	2.4.4.1.3&$I(3,(2)^n,1,1,2,1,2)$ & $I(3,(2)^n,2,1,2,1,2)$  & $[(2)^{n+1},(2,1),3,\infty]$ & $[(2)^{n+1},2,1,\infty]$  \\
	2.4.4.1.4&$I(3,(2)^n,1,1,2,1,2)$ & $I(3,(2)^n,2,1,2,1,1)$  & $[(2)^{n+1},(2,1),3,\infty]$ & $[(2)^{n+1},2,1,\infty]$  \\
	2.4.4.2&$I(3,(2)^n,1,1,2,1)$ & $I(3,(2)^n,2,1,2,2)$  & $[(2)^{n+1},(2,1)^2,20,\infty]$ & $[(2)^{n+1},2,1,\infty]$  \\
	2.4.4.3&$I(3,(2)^n,1,1,2,2)$ & $I(3,(2)^n,2,1,2,1)$  & $[(2)^{n+1},(2,1),3,\infty]$ & $[(2)^{n+1},2,1,\infty]$  \\
	2.4.4.4&$I(3,(2)^n,1,1,2,2)$ & $I(3,(2)^n,2,1,2,2)$  & $[(2)^{n+1},(2,1),3,\infty]$& $[(2)^{n+1},2,1,\infty]$  \\
\end{tabular}
\caption{Case analysis showing the forbidden patterns containing $Z_n=1-X_n-Y_n$ when} $n$ is even.
\label{tab:my_label_1}
\end{table}
The computations are lengthy and we only detail Case~2.1 of Table~\ref{tab:my_label_1} in this paper.

To prove the result for  Case~2.1 in Table~\ref{tab:my_label_1}, assume that
\begin{equation*}
X_n \in I(3,(2)^n,1,2)\,, \quad Y_n \in I(3,(2)^n,2,2) \,.
\end{equation*}
Setting
\[
A_n=1-\tfrac{\sqrt{2}}{2}+\tfrac{12-19\sqrt{2}}{-19+6\sqrt{2}+17(1+\sqrt{2})^{2n+4}} \,, \quad
B_n=1-\tfrac{\sqrt{2}}{2}+\tfrac{10+\sqrt{2}}{1+5\sqrt{2}-7(1+\sqrt{2})^{2n+5}} \,,
\]
and
\[
C_n=1-\tfrac{\sqrt{2}}{2}+\tfrac{\sqrt{2}}{-(1+\sqrt{2})^{2n+8}+1} \,, \quad
D_n=1-\tfrac{\sqrt{2}}{2}+\tfrac{\sqrt{2}}{(1+\sqrt{2})^{2n+8}+1} \,,
\]
we obtain
\[
I(3,(2)^n,1,2) =(A_n,B_n]=1-\tfrac{\sqrt{2}}{2}+\left(\tfrac{12-19\sqrt{2}}{-19+6\sqrt{2}+17(1+\sqrt{2})^{2n+4}},
\tfrac{10+\sqrt{2}}{1+5\sqrt{2}-7(1+\sqrt{2})^{2n+5}}\right]\,,
\]
\[
\ I(3,(2)^n,2,2)=(C_n,D_n]=1-\tfrac{\sqrt{2}}{2}+\left(\tfrac{\sqrt{2}}{-(1+\sqrt{2})^{2n+8}+1},
\tfrac{\sqrt{2}}{(1+\sqrt{2})^{2n+8}+1}\right] \,.
\]
We claim that
\begin{align*}
Z_n=1-X_n-Y_n\in \left[[(2)^{n+1},3,\infty]\,,[(2)^{n+1},3,1,\infty]\right] \,.
\end{align*}
Since $n+1$ is odd, by Lemma~\ref{lem:x3-y3-z-1}, this implies that the bounds on $Z_n$ are contained in a forbidden pattern.
Indeed,
\begin{equation*}
Z_n= 1-X_n-Y_n\in[1-B_n-D_n, 1-A_n-C_n) \,,
\end{equation*}
by direct computation, we get
\[
1-B_n-D_n=\sqrt2-1-\tfrac{10+\sqrt{2}}{1+5\sqrt{2}-7(1+\sqrt{2})^{2n+5}}-\tfrac{\sqrt{2}}{(1+\sqrt{2})^{2n+8}+1}\,,
\]
\[
1-A_n-C_n=\sqrt2-1-\tfrac{12-19\sqrt{2}}{-19+6\sqrt{2}+17(1+\sqrt{2})^{2n+4}}-\tfrac{\sqrt{2}}{-(1+\sqrt{2})^{2n+8}+1}\,.
\]
Since $n$ is even, 
\begin{align*}
&\ \ \ \ (1-B_n-D_n)-[(2)^{n+1},3,\infty]\\
&=\sqrt2-1-\tfrac{10+\sqrt{2}}{1+5\sqrt{2}-7(1+\sqrt{2})^{2n+5}}-\tfrac{\sqrt{2}}{(1+\sqrt{2})^{2n+8}+1}-
\tfrac{(-3+2\sqrt{2})^{n+2}+1}{(1-\sqrt{2})(-3+2\sqrt{2})^{n+2}+1+\sqrt{2} }\\
&=\tfrac{(1+\sqrt{2})^{n}(24+16\sqrt{2})-(1-\sqrt{2})^{n}(-24+16\sqrt{2})}
{\left(-(1-\sqrt{2})^{n+3}-(1+\sqrt{2})^{n+3}\right)\left(10+(2\sqrt{2}-1)(1-\sqrt{2})^{2n+6}-(2\sqrt{2}+1)(1+\sqrt{2})^{2n+6}\right)} \,,
\end{align*}
is positive. 
We show this by checking that the numerator and the denominator are both positive.  
Indeed, since $\left|1+\sqrt{2}\right|>\left|1-\sqrt{2}\right|$ and $\left|24+16\sqrt{2}\right|>\left|-24+16\sqrt{2}\right|$, the numerator is positive.  Similarly, the first term in the denominator $-(1-\sqrt{2})^{n+3}-(1+\sqrt{2})^{n+3}$ is negative since $\left|1-\sqrt{2}\right|<\left|1+\sqrt{2}\right|$. Now, since $\left|2\sqrt{2}-1\right|<\left|2\sqrt{2}+1\right|$ and $\left|1-\sqrt{2}\right|<\left|1+\sqrt{2}\right|$, the second term in the denominator
$$
10+(2\sqrt{2}-1)(1-\sqrt{2})^{2n+6}-(2\sqrt{2}+1)(1+\sqrt{2})^{2n+6} \,,
$$
is negative, so the denominator is positive. Therefore, the quotient is positive, as desired.

Next, we verify that
\begin{align*}
&\ \ \ \ \ [(2)^{n+1},3,1,\infty]-(1-A_n-C_n)\\
&=\tfrac{\frac{1}{7}(9+4\sqrt{2})(-3+2\sqrt{2})^{n+2}+1}{\frac{1}{7}(1- 5\sqrt{2})(-3+2\sqrt{2})^{n+2}+1+\sqrt{2}}-\left(\sqrt2-1-\tfrac{12-19\sqrt{2}}{-19+6\sqrt{2}+17(1+\sqrt{2})^{2n+4}}-\tfrac{\sqrt{2}}{-(1+\sqrt{2})^{2n+8}+1}\right)\\
&=\tfrac{(1+\sqrt{2})^{n}(72+60\sqrt{2})-(1-\sqrt{2})^{n}(-72+60\sqrt{2})}
{\left((1-\sqrt{2})^{n+3}(4+\sqrt{2})+(1+\sqrt{2})^{n+3}(4-\sqrt{2})\right)\left(-28+(1-\sqrt{2})^{2n+6}(6-\sqrt{2})+(1+\sqrt{2})^{2n+6}(6+\sqrt{2})\right)} \,,
\end{align*}
is positive.  As before, we can see this by checking that the numerator and the denominator are both positive.  This time, we have $\left|1+\sqrt{2}\right|>\left|1-\sqrt{2}\right|$ and $\left|72+60\sqrt{2}\right|>\left|72-60\sqrt{2}\right|$, so the numerator is positive.  Similarly, it is easy to check that the first term and the second term in the denominator are positive, so the denominator is positive. Therefore, the quotient is positive, as desired.
Thus, we obtain that
$$
Z_n=1-X_n-Y_n \in \left[[(2)^{n+1},3,\infty],[(2)^{n+1},3,1,\infty]\right] \,,
$$
which implies that the bounds on $Z_n$ are contained in a forbidden pattern.  The other cases in Table~\ref{tab:my_label_1} can be proven in the same way.

\medskip
\noindent    \textit{Case 3.2.} $n$ is odd.
\medskip

\noindent The computations of forbidden patterns containing $Z_n=1-X_n-Y_n$ when $n$ is odd are summarized in Table~\ref{tab:my_label_2}, where $X_n \notin I(3,(2)^{n+1})$ or\ $Y_n \notin I(3,(2)^{n+1})$.
Again, there are 17 cases that must be considered. The proofs are similar to those for the even case. 
\begin{table}[ht]
\centering
\footnotesize
\begin{tabular}{c|c|c|c|c}
	\thead{Case} &
	\thead{Cylinder set \\ for $X_n$} & \thead{Cylinder set \\ for $Y_n$} & \thead{Left endpoint of \\ the forbidden pattern} & \thead{Right endpoint of \\ the forbidden pattern}\\
	\hline
	1 & $I(3,(2)^n,1)$ & $I(3,(2)^n,1)$ & $[(2)^{n+1},\infty]$  &  $[(2)^{n+1},3,\infty]$ \\
	2.1 & $I(3,(2)^n,1,2)$ & $I(3,(2)^n,2,2)$  & $[(2)^{n+1},3,1,\infty]$  &  $[(2)^{n+1},3,\infty]$ \\
	2.2&$I(3,(2)^n,1,2)$ & $I(3,(2)^n,2,1)$    & $[(2)^{n+1},3,1,\infty]$ & $[(2)^{n+1},(2,1),12,\infty]$ \\
	2.3.1&$I(3,(2)^n,1,1,1)$ & $I(3,(2)^n,2,2,1)$  & $[(2)^{n+1},3,2,\infty]$ & $[(2)^{n+1},3,\infty]$ \\
	2.3.2&$I(3,(2)^n,1,1,1)$ & $I(3,(2)^n,2,2,2)$  & $[(2)^{n+1},3,3,\infty]$ & $[(2)^{n+1},3,\infty]$ \\
	2.3.3&$I(3,(2)^n,1,1,2)$ & $I(3,(2)^n,2,2,1)$  & $[(2)^{n+1},3,10,\infty]$ & $[(2)^{n+1},(2,1),14,\infty]$ \\
	2.3.4&$I(3,(2)^n,1,1,2)$ & $I(3,(2)^n,2,2,2)$  & $[(2)^{n+1},3,20,\infty]$ & $[(2)^{n+1},(2,1),10,\infty]$ \\
	2.4.1&$I(3,(2)^n,1,1,1)$ & $I(3,(2)^n,2,1,1)$  & $[(2)^{n+1},3,4,\infty]$ & $[(2)^{n+1},(2,1),6,\infty]$ \\
	2.4.2&$I(3,(2)^n,1,1,1)$ & $I(3,(2)^n,2,1,2)$  & $[(2)^{n+1},3,8,\infty]$ & $[(2)^{n+1},(2,1),4,\infty]$ \\
	2.4.3&$I(3,(2)^n,1,1,2)$ & $I(3,(2)^n,2,1,1)$  & $[(2)^{n+1},2,1,\infty]$ & $[(2)^{n+1},(2,1),3,\infty]$ \\
	2.4.4.1.1&$I(3,(2)^n,1,1,2,1,1)$ & $I(3,(2)^n,2,1,2,1,1)$  & $[(2)^{n+1},2,1,\infty]$ & $[(2)^{n+1},(2,1),3,\infty]$ \\
	2.4.4.1.2&$I(3,(2)^n,1,1,2,1,1)$ & $I(3,(2)^n,2,1,2,1,2)$  & $[(2)^{n+1},2,1,\infty]$ & $[(2)^{n+1},(2,1),3,\infty]$ \\
	2.4.4.1.3&$I(3,(2)^n,1,1,2,1,2)$ & $I(3,(2)^n,2,1,2,1,2)$  & $[(2)^{n+1},2,1,\infty]$ & $[(2)^{n+1},(2,1),3,\infty]$ \\
	2.4.4.1.4&$I(3,(2)^n,1,1,2,1,2)$ & $I(3,(2)^n,2,1,2,1,1)$  & $[(2)^{n+1},2,1,\infty]$ & $[(2)^{n+1},(2,1),3,\infty]$ \\
	2.4.4.2&$I(3,(2)^n,1,1,2,1)$ & $I(3,(2)^n,2,1,2,2)$  & $[(2)^{n+1},2,1,\infty]$ & $[(2)^{n+1},(2,1)^2,20,\infty]$ \\
	2.4.4.3&$I(3,(2)^n,1,1,2,2)$ & $I(3,(2)^n,2,1,2,1)$  & $[(2)^{n+1},2,1,\infty]$ & $[(2)^{n+1},(2,1),3,\infty]$ \\
	2.4.4.4&$I(3,(2)^n,1,1,2,2)$ & $I(3,(2)^n,2,1,2,2)$  & $[(2)^{n+1},2,1,\infty]$ & $[(2)^{n+1},(2,1),3,\infty]$ \\
\end{tabular}
\caption{Case analysis showing the forbidden patterns containing $Z_n=1-X_n-Y_n$ when $n$ is odd.}
\label{tab:my_label_2}
\end{table}

Immediately, from Table~\ref{tab:my_label_1} and Table~\ref{tab:my_label_2}, we see that
\begin{align*}
X_n\in I(3,(2)^n,1)\,, \quad Y_n\in I(3,(2)^n,1) \,,
\end{align*}
is impossible by Case~1. Together, Cases~2.3.1 to 2.3.4 show that
\begin{align}\label{1122}
X_n\in I(3,(2)^n,1,1)\,, \quad Y_n\in I(3,(2)^n,2,2) \,,
\end{align}
is impossible. Next, Cases~2.4.4.1.1 to 2.4.4.1.4 show that it is impossible to have $X_n\in I(3,(2)^n,1,1,2,1)$ and $Y_n\in I(3,(2)^n,2,1,2,1)$. By this and Cases~2.4.4.2 to 2.4.4.4, we get that it is impossible to have $X_n\in I(3,(2)^n,1,1,2)$ and $Y_n\in I(3,(2)^n,2,1,2)$. This, together with Cases~2.4.1 to 2.4.3, shows that
\begin{align}\label{1121}
X_n\in I(3,(2)^n,1,1)\,, \quad Y_n\in I(3,(2)^n,2,1) \,,
\end{align}
is impossible. Next, from Case~2.1, Case~2.2, \eqref{1122}, and \eqref{1121}, we obtain that
\begin{align*}
X_n\in I(3,(2)^n,1)\,, \quad Y_n\in I(3,(2)^n,2) \,,
\end{align*}
is impossible. This analysis of cases shows that any solution of $X_n+Y_n+Z_n=1$ must satisfy
$$
X_n\,, Y_n \in I(3,(2)^{n+1}) \,.
$$
Therefore,
\begin{align}\label{x+y+z=1_2}
x=y=\frac{2-\sqrt{2}}{2} = [3,\overline{2}]\,, \quad z = \sqrt{2}-1 = [\overline{2}]\,,
\end{align}
is the only possible solution of $x+y+z=1$ for this case.

\medskip
\noindent    \textit{Case 4.} $x,y,z \in \B_{2,1}\setminus\B_2$.
\medskip

\noindent We claim that this case is impossible. In fact, $I(3)=[\frac{1}{4},\frac{1}{3})$, so
$x+y+z<1$, which is a contradiction.
Therefore, by \eqref{x+y+z=1_1} and \eqref{x+y+z=1_2}, the theorem is proven.

\subsection{Proof of Theorem~\ref{Main2}}
Observe that $x,y,z \in \B_{2,1}$ satisfy
the equality $x+y+z=1$ if and only if the three equalities
\begin{equation}\label{eq:3eq}
x+y=1-z\,, \quad x + z = 1-y \,, \quad y + z = 1-x \,,
\end{equation}
are also satisfied.  Moreover, by Corollary~\ref{cor:Trivial}, the numbers
$1-z$, $1-y$, and $1-x$ must also be in $\B_{2,1}$.  Next, recall from Theorem~\ref{Main} that
\begin{equation}\label{eq:sol1}
x=2-\sqrt{3}=[3,\overline{1,2}]\,,\quad  y=z=\frac{\sqrt{3}-1}2=[\overline{2,1}]\,,
\end{equation}
and
\begin{equation}\label{eq:sol2}
x=y=\frac{2-\sqrt{2}}2=[3,\overline{2}]\,, \quad z=\sqrt{2}-1=[\overline{2}]\,,
\end{equation}
are the only solutions of the equality $x+y+z=1\,,\ (x,y,z\in\B_{2,1}\,,\ x\leq y \leq z)$.

In \eqref{eq:sol1}, the solution of $x+y+z=1$ happens to have $y=z$, so \eqref{eq:sol1} provides two solutions of our target equality due to \eqref{eq:3eq}.  Specifically, we get that
\[
x=2-\sqrt{3}=[3,\overline{1,2}]\,, \quad y=\frac{\sqrt{3}-1}2=[\overline{2,1}]\,, \quad z=\frac{3-\sqrt{3}}{2}=[1,1,1,\overline{2,1}]\,,
\]
and
\[x=y=\frac{\sqrt{3}-1}2=[\overline{2,1}]\,, \quad z=\sqrt{3}-1=[\overline{1,2}] \,,\]
are both solutions of the equality $x+y=z\,, \ (x,y,z\in \B_{2,1}\,, x\leq y)$.

In \eqref{eq:sol2}, the solution of $x+y+z=1$ happens to have $x=y$, so \eqref{eq:sol2} provides two more solutions of our target equality due to \eqref{eq:3eq}. Specifically, we get that
\[
x=y=\frac{2-\sqrt{2}}2=[3,\overline{2}]\,, \quad z=2-\sqrt{2}=[1,1,\overline{2}]\,,
\]
and
\[x = \frac{2-\sqrt{2}}2=[3,\overline{2}]\,, \quad  y = \sqrt{2}-1=[\overline{2}]\,, \quad z = \frac{\sqrt{2}}{2}=[1,\overline{2}] \,, \]
are also solutions of the equality $x+y=z\,, \ (x,y,z\in \B_{2,1}\,, x\leq y)$.

Finally, we know that these four solutions are the only possibilities because any additional solution to the equality $x+y=z\,, \ (x,y,z\in \B_{2,1}\,, x\leq y)$ would produce an additional solution to $x+y+z=1\,,\ (x\leq y \leq z)$, which is impossible by Theorem~\ref{Main}.

\begin{remark}
Observe that Table~\ref{tab:my_label_2} can be obtained from Table~\ref{tab:my_label_1} by exchanging the left and the right endpoints of the forbidden patterns in the second-last and last columns.
\end{remark}

\subsection{Proof of Theorem \ref{Main3}}
By Theorem~\ref{Main}, we know that there are exactly two solutions of the equality $x+y+z=1$ under the restrictions $x,y,z\in\B_{2,1}$ and $x \leq y\leq z$.  Starting from the second solution
$$
x=y=\frac{2-\sqrt{2}}2=[3,\overline{2}],\ z=\sqrt{2}-1=[\overline{2}] \,,
$$
we can construct further explicit solutions of the equation $x+y+z=1$ under the weaker restrictions $x,y,z \in \B_2^*$ and $x,y,z\in \B_3$.  We do this by making certain insertions into the continued fraction expansions of $x$, $y$, and $z$.

Consider the result of inserting a 2 after the number 3 in the continued fraction expansions of $x$ and $y$, and inserting a 2 foremost in the continued fraction expansion of $z$. This produces new numbers $X$, $Y$, $Z$ that satisfy
$$
X=\cfrac 1{3+\cfrac 1{\cfrac 1{x}-1}}\,, \quad Y=\cfrac 1{3+\cfrac 1{\cfrac 1{y}-1}}\,, \quad
Z=\cfrac 1{2+z} \,.
$$
From these expressions, we obtain the equality
\begin{equation}\label{eq_1_X_Y_Zb}
1-X-Y-Z=\frac{(x-y)^2}{(3-2x)(3-2y)(3-x-y)}\,,
\end{equation}
which implies that $X+Y+Z=1$ because $x=y$. 

Similarly, consider a different type of insertion where we insert $3, 1, 3$ after the number $3$ in the continued fraction expansions of $x$ and $y$, and $1, 1, 2, 1, 1$ after the number $2$ in the continued fraction expansion of $z$.  In this case, we get new numbers $X$, $Y$, and $Z$ that satisfy
$$
X=\cfrac 1{3+\cfrac 1{3+\cfrac 1{1+x}}}\,, \quad Y=\cfrac 1{3+\cfrac 1{3+\cfrac 1{1+y}}}\,, \quad
Z=\cfrac 1{2+\cfrac 1{1+\cfrac 1{1+\cfrac 1{2+\cfrac 1{1+\cfrac 1{\cfrac1{z}-1}}}}}} \,.
$$
This gives us the equality
\begin{equation}\label{eq_1_X_Y_Za}
1-X-Y-Z=\frac{-5(x-y)^2}{(10x+13)(10y+13)(5x+5y+13)}\,,
\end{equation}
which again implies that $X+Y+Z=1$ because $x=y$.

Therefore, there are at least two different types of insertions that result in further solutions of the equality.  Moreover, $S=\{2,11211\}$ is a code, i.e., any word generated by $S$ can be uniquely decomposed into a word over $S$ (see \cite[Chapter~1]{Lothaire:02} for a definition).  From this observation, we immediately get that there are infinitely many solutions of the equality for $x,y,z \in \B_2^*$, since $\B_2^*=\bigcup_{j=1}^\infty\B_{2,j}$.  Furthermore, we obtain uncountably many explicit solutions of the equality for $x,y,z \in \B_3$ by this method.

\begin{remark}
We can also prove Theorem~\ref{Main3} by starting from the first solution of Theorem~\ref{Main} and using the same method with a small modification to the insertions.
\end{remark}

\begin{remark}\label{rem:intuition}
By the formula \eqref{eq_1_X_Y_Zb}, we can reprove Theorem~\ref{Main} by induction.  The right side of \eqref{eq_1_X_Y_Zb} gives the error term after the number $2$ is inserted into the continued fraction expansions of $X_n$, $Y_n$, and $Z_n$ in Tables~\ref{tab:my_label_1} and~\ref{tab:my_label_2}.  In particular, this error term is small enough that the number of cases to consider is always 17.
\end{remark}

\section{Open Problems}\label{sec:open}
In Theorem~\ref{Main}, we prove that there are exactly two solutions of the equality $x+y+z=1$ in $\B_{2,1}$ and that these solutions are in $\Q(\sqrt{2})$ and $\Q(\sqrt{3})$. In $\B_{2,2}$, we obtain at least three additional solutions of the equation $x+y+z=1(x \leq y \leq z)$:
\begin{gather*}
x=y=[3,3,\overline{1,2}]\,, \quad z=[2,1,\overline{1,2}]\,;\\
x=y=[3,1,\overline{1,2}]\,, \quad z=[2,3,\overline{1,2}]\,;\\
x=[3,1,\overline{1,2}]\,, \quad y=[3,3,\overline{1,2}]\,, \quad z=[2,2,2,\overline{2,1}]\,.
\end{gather*}
However, we do not know whether or not the number of solutions is finite in $\B_{2,j}(j \ge 2)$ and whether or not they are in a real quadratic field. In Theorem~\ref{Main3}, we prove that there are infinitely many solutions of the equality $x+y+z=1$ in $\B_2^*$, but we do not know how large the set of solutions is, or its Hausdorff dimension. We know that the number of solutions changes from finite to infinite as we go from $\B_{2,1}$ to $\B_2^*$, but what happens in between?

The proof of Theorem \ref{Main3} relied on the two
key identities \eqref{eq_1_X_Y_Zb} and \eqref{eq_1_X_Y_Za}.
In fact, many similar identities can be found.
For example,
\begin{align*}
1-[2,1,3,\tfrac{1}{x}-1]-[2,1,3,\tfrac{1}{y}-1]-[3,1,1,\tfrac{1}{1-x-y}]&=\frac{4(x-y)^2}{(8x-11)(8y-11)(11-4x-4y)}\,,\\
1-[3,1,1,\tfrac{1}{x}]-[3,1,1,\tfrac{1}{y}]-[2,3,1,\tfrac{1}{1-x-y}-1]&=-\frac{2(x-y)^2}{(4x+7)(4y+7)(2x+2y+7)}\,,\\
1-[3,3,1,\tfrac{1}{x}-2]-[3,3,1,\tfrac{1}{y}-2]-[2,1,1,1,1-x-y]&=\frac{8(x-y)^2}{(16x-13)(16y-13)(
	13-8x-8y)}\,.
	\end{align*}
	From such identities whose right side is divisible by $x-y$,
	we can construct
	further explicit solutions of $x+y+z=1$ in $\B_2^*$ or $\B_3$.
	It may be an interesting problem to characterize the set of such identities.
	Such transformations on $x$ form a semi-group of
	integral M\"obius transformations, but can we describe its generators?
	Are they finite?
	
	Moreover, our method produces many isosceles triangles, but we know very little
	about scalene triangles.
	Below, we point out a
	sporadic infinite family of solutions to $x+y+z=1$ of this type:
	$$
	x=[3,(2)^{\ell},1,\overline{1,2}]\,, \quad
	y=[3,(2)^{\ell},3,\overline{1,2}]\,, \quad
	z=[(2)^{4+2\ell},\overline{1,2}] \qquad \ell=0,1,\dots \,,
	$$
	which is shown by induction using the two lucky equalities:
	\begin{align*}
1-[3,\tfrac{1}{x}-1]-[3,\tfrac{1}{y}-1]-[2,2,\tfrac{1}{1-x-y}]&=
\frac{2 (x+y-3) (2 x y-2 x-2 y+1)}{(2 x-3) (2 y-3) (7-2 x-2 y)}\,,\\
2[3,\tfrac{1}{x}-1][3,\tfrac{1}{y}-1]
-2[3,\tfrac{1}{x}-1]-2[3,\tfrac{1}{y}-1]+1
&=-
\frac{2 xy -2 x-2 y+1}{(2 x-3) (2 y-3)}\,.
\end{align*}

Lastly, regarding the diffraction from the Delone sets obtained in Section~\ref{sec:specific}, an interesting open problem is to determine how the continued fraction expansions of the badly approximable numbers $x$, $y$, $z$ are related to the autocorrelation measures of the associated Delone sets; see \cite[Chapter~9]{Baake-Grimm:13} for an overview of the theory of diffraction from point sets.  A different construction of Delone sets via badly approximable angles and Fermat Spirals is discussed in \cite{Adiceam-Tsokanos:22, Akiyama:20,  Marklof:20, Yamagishi-Sushida-Sadoc:21}; the diffraction from these examples may also be an interesting avenue of further study.

\appendix

\section{Proof of Compactness of \texorpdfstring{$W_{r,R}(\RR^2)$}{Wr,R(R2)}}\label{sec:compactness}

For completeness, here we give a proof that $W_{r,R}(\RR^2)$ is compact in the Chabauty--Fell topology.
Note that to obtain
the desired compactness of $W_{r,R}(\RR^2)$, it is necessary to use open balls for uniform
discreteness and closed balls for relative denseness
in the definition of an $(r,R)$-Delone set.
\begin{lem}
\emph{
	The set
	\[
	\cW_{r,R}(\RR^2) = \{Y \in 2^{\RR^2} \,:\, Y \ \text{is an} \ (r,R)\text{-Delone set in} \ \RR^2\} \,,
	\]
	is compact in the Chabauty--Fell topology.
}
\end{lem}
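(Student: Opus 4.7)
The plan is to exploit the classical fact, recalled in the remark preceding the statement, that $2^{\RR^2}$ is compact in the Chabauty--Fell topology. Consequently it will suffice to show that $\cW_{r,R}(\RR^2)$ is closed in $2^{\RR^2}$. To this end, I would take a sequence $\{\Lambda_n\}_{n \geq 1}$ in $\cW_{r,R}(\RR^2)$ with $\Lambda_n \to \Lambda$ in the Chabauty--Fell topology and verify that $\Lambda$ is both $r$-uniformly discrete and $R$-relatively dense in $\RR^2$.

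For uniform discreteness, I would argue by contradiction. Suppose $\Lambda$ contains two distinct points $x, y$ with $\|x-y\| < r$, and choose $\varepsilon > 0$ strictly smaller than $\min\{\tfrac{1}{2}(r - \|x-y\|),\ \tfrac{1}{2}\|x-y\|\}$ and also with $\max\{\|x\|, \|y\|\} < 1/\varepsilon$. The inclusion $\Lambda \cap B(0, 1/\varepsilon) \subseteq \Lambda_n + B(0, \varepsilon)$, valid for all sufficiently large $n$, then yields points $x_n, y_n \in \Lambda_n$ with $\|x - x_n\|, \|y - y_n\| < \varepsilon$. The first bound on $\varepsilon$ forces $\|x_n - y_n\| < r$, while the second forces $x_n \neq y_n$, contradicting the $r$-uniform discreteness of $\Lambda_n$. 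This step uses in a crucial way that uniform discreteness is formulated with \emph{open} balls.

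For relative denseness, I would first check $\Lambda \neq \emptyset$ by picking any $y_n \in \Lambda_n \cap \overline{B(0,R)}$, extracting a Bolzano--Weierstrass subsequence converging to some $y \in \overline{B(0,R)}$, and using $\Lambda_n \cap B(0, 1/\varepsilon) \subseteq \Lambda + B(0, \varepsilon)$ for small $\varepsilon > 0$ to deduce $\dist(y, \Lambda) \leq \varepsilon$ for each such $\varepsilon$, hence $y \in \Lambda$ by closedness. Next I would fix $x \in \RR^2$ and suppose for contradiction that $\overline{B(x,R)} \cap \Lambda = \emptyset$; compactness of $\overline{B(x,R)}$ together with closedness and non-emptiness of $\Lambda$ then gives $\delta := \dist(\overline{B(x,R)}, \Lambda) > 0$. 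Picking $\varepsilon > 0$ with $\varepsilon < \delta$ and $\|x\| + R < 1/\varepsilon$, relative denseness of $\Lambda_n$ supplies a point $y_n \in \Lambda_n \cap \overline{B(x,R)} \subseteq \Lambda_n \cap B(0, 1/\varepsilon)$, and the Chabauty--Fell inclusion produces a point $z_n \in \Lambda$ with $\|y_n - z_n\| < \varepsilon$, so $\|x - z_n\| < R + \varepsilon < R + \delta$, contradicting the definition of $\delta$. The \emph{closed}-ball convention in the definition of relative denseness is exactly what ensures that $y_n$ can be selected in $\overline{B(x,R)}$ and that the above distance estimate goes through.

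The main technical obstacle is not deep but requires care, namely keeping track of the open-versus-closed ball conventions, as the remark preceding the lemma emphasises. With closed balls on both sides, two points at distance exactly $r$ could appear in the limit without having appeared in any $\Lambda_n$, destroying uniform discreteness; with open balls on both sides, a sequence whose approximating points sit precisely on the sphere $\|x-y\|=R$ could limit to a set with a point at distance exactly $R$ from some $x$ but nothing strictly closer, destroying relative denseness. The open-then-closed convention in the definition of an $(r,R)$-Delone set is precisely what makes both conditions simultaneously closed under Chabauty--Fell limits.
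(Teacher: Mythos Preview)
Your approach is essentially the same as the paper's: reduce to showing $\cW_{r,R}(\RR^2)$ is closed in the compact space $2^{\RR^2}$, and then verify that both Delone conditions pass to Chabauty--Fell limits via the inclusions defining the metric $d$. Two small points are worth noting.

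First, there is a factor-of-two slip in your uniform-discreteness argument. With the paper's definition (``$\card(B(x,r)\cap\Lambda)\le 1$ for every $x\in\RR^2$''), the negation is not ``two distinct points $x,y\in\Lambda$ with $\|x-y\|<r$'' but rather ``some open ball of radius $r$ contains two points of $\Lambda$'', which is equivalent to the existence of distinct $x,y\in\Lambda$ with $\|x-y\|<2r$. As written, your argument only establishes that the minimum distance in $\Lambda$ is at least $r$, not $2r$. The fix is immediate: either assume $\|x-y\|<2r$ and take $\varepsilon<\tfrac12(2r-\|x-y\|)$, or negate the definition directly as the paper does by fixing a center $z$ with two points of $\Lambda$ in $B(z,r)$ and showing their approximants in $\Lambda_n$ remain in $B(z,r)$.

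Second, for relative denseness your contradiction via $\delta=\dist(\overline{B(x,R)},\Lambda)>0$ is correct but slightly roundabout. The Bolzano--Weierstrass step you use to show $\Lambda\neq\emptyset$ already works verbatim with $0$ replaced by an arbitrary center $x$, directly producing a point of $\Lambda\cap\overline{B(x,R)}$; this is exactly how the paper argues, and it makes the separate non-emptiness check and the distance argument unnecessary.
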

\begin{proof}
It follows from the early work of Fell \cite{Fell:62} that the space $2^X$ is compact in the Chabauty--Fell topology for every topological space $X$. Let $X = \RR^2$.  Since $\cW_{r,R}(X) \subseteq 2^X$, it suffices to show that $\cW_{r,R}(X)$ is closed.  To this end, let $\{\Lambda_n\}_{n\geq 1}$ be a sequence in $\cW_{r,R}(X)$ converging to some $\Lambda \in 2^X$.  Our goal is to prove that $\Lambda$ is both $r$-uniformly discrete and $R$-relatively dense. 

\emph{\textbf{Proof of $r$-uniform discreteness.}}  Let $x\in X$ be arbitrary.  Suppose by contradiction that there are two points $y,z\in \Lambda\cap B(x,r)$ with $y\neq z$.  Consider any
$$
0 < \eps < \min(\textstyle{\frac{1}{r+\|x\|}},r-\|x-y\|,r-\|x-z\|,\textstyle{\frac{1}{2}}\|y-z\|) \,.
$$
By convergence in the Chabauty--Fell topology, there exists an $N\geq 1$ such that
$$
\Lambda \cap B(0,\textstyle{\frac{1}{\eps}}) \subseteq \Lambda_n + B(0,\eps) \quad \forall n\geq N\,.
$$
Now, since $\eps < \frac{1}{r + \|x\|}$, we have that $B(x,r)\subseteq B(0,\textstyle{\frac{1}{\eps}})$, so
$$
y, z \in \Lambda \cap B(x,r)  \subseteq \Lambda \cap B(0,\textstyle{\frac{1}{\eps}}) \subseteq \Lambda_n + B(0,\eps) \quad \forall n \geq N\,.
$$
In particular, there exist points $y_N$ and $z_N$ in $\Lambda_N$ such that $\|y-y_N\|<\eps$ and $\|z-z_N\| < \eps$.  By our choice of $\eps$, we have $\|y-z\| > 2\eps$.  From this, we see that
$$
\|y_N - z_N\| \geq \|y - z\| - \|y-y_N\| - \|z - z_N\| > 2\eps - \eps - \eps = 0 \,,
$$
so $y_N \neq z_N$.  Furthermore, we have $r-\|x-y\| > \eps$ and $r-\|x-z\| > \eps$, so $y_N$ and $z_N$ are both in $B(x,r)$.  Indeed, we have
$$
\|x - y_N\| \leq \|x-y\| + \|y-y_N\| < r-\eps + \eps = r\,,
$$
and a similar inequality shows that $\|x-z_N\| < r$.  This contradicts $r$-uniform discreteness of $\Lambda_N$.  Therefore, $\Lambda\cap B(x,r)$ has at most one element for every $x\in \RR^2$, i.e.\ $\Lambda$ is $r$-uniformly discrete.

\emph{\textbf{Proof of $R$-relative denseness.}}  Let $x\in X$ be arbitrary.  We aim to show that $\Lambda \cap \overline{B(x,R)} \neq \emptyset$.  By assumption, we have that $\Lambda_n$ is $R$-relatively dense for every $n$, so there exists a sequence of points $y_n$ in $X$ such that $y_n \in \Lambda_n \cap \overline{B(x,R)}$ for all $n$.  Since $\overline{B(x,R)}$ is compact in $X$ (in the Euclidean topology), there exists a subsequence $y_{n_i}$ of $y_n$ and some $y\in\overline{B(x,R)}$ such that $\|y_{n_i} - y\| \rightarrow 0$. Next, observe that for all $n\in\N$ with $n > R+\|x\|+1$, we have $\overline{B(x,R)} \subseteq B(0,n)$.  From this and convergence in the Chabauty--Fell topology, there exists an $N>R+\|x\|+1$ such that
$$
y_{n} \in \Lambda_n \cap \overline{B(x,R)} \subseteq \Lambda_n \cap B(0,n) \subseteq \Lambda + B(0,\textstyle{\frac{1}{n}}) \quad \forall n\geq N\,.
$$
Thus, there is a sequence $z_n$ in $\Lambda$ such that $\|y_n - z_n\| < \frac{1}{n}$ for all $n\geq N$.  In particular, we have $z_{n_i}\rightarrow y$.  Now, since $\Lambda$ is a closed set in the Euclidean topology, we must have $y\in \Lambda$.  Moreover, $y$ is also in $\overline{B(x,R)}$, so we have shown that there is some $y\in \Lambda \cap \overline{B(x,R)} \neq \emptyset$, as desired.  This completes the proof.
\end{proof}

\section*{Acknowledgements}
S.A. is supported by JSPS grants 20K03528, 24K06662 and RIMS in Kyoto University.
E.R.K. is supported by the Natural Sciences and Engineering Research Council of Canada (NSERC) via grant 2024-0485 and the Canadian Graduate Scholarship - Doctoral.
Y-L.X. is supported by the China Scholarship Council, China (No.\,202306770085).  We would also like to thank Nicolae Strungaru for his insights on the construction of Delone sets, and Noel Murasko for his assistance in producing the graphics. We are grateful to the anonymous referee for giving us comments to increase the readability of the original manuscript. The introduction is greatly improved by this suggestion.

\end{document}